\theoremstyle{plain}
\newtheorem*{theorem*}{Theorem}
\numberwithin{equation}{section}
\theoremstyle{plain}
\newtheorem{definition}{Definition}[section]
\newtheorem{theorem}[equation]{Theorem}
\newtheorem{remark}[equation]{Remark}
\newtheorem{corollary}[equation]{Corollary}
\newtheorem{lemma}[equation]{Lemma}
\newtheorem{proposition}[equation]{Proposition} 
\providecommand{\keywords}[1]
{
  \small	
  \textbf{\textit{Keywords---}} #1
}
\title{Unbounded dynamics for three dimensional vector fields}
\author{Eran Igra$^{1,2}$}
\address{1. Shanghai Institute for Mathematics and Interdisciplinary Sciences (SIMIS), Shanghai 200433, China\newline
\newline
2. Research Institute of Intelligent Complex Systems, Fudan University, Shanghai 200433, China}
\email{eranigra@simis.cn}
\begin{document}

\begin{abstract}
Given a three-dimensional vector field $F$ with a finite number of fixed points - what can we say on its unbounded dynamics? We tackle this question, and prove sufficient conditions for $F$ to have fixed points with unbounded invariant manifolds. Following that, we use these results to study the dynamics of the Genesio--Tesi system, the Belousov--Zhabotinsky reaction, and the Michelson system.
\end{abstract}

\maketitle
\keywords{\textbf{Keywords} - Ordinary Differential Equations, Unbounded Dynamics, Nonlinear Dynamics, Invariant Manifolds}

\subjclass{\textbf{AMS classification} - 34A26, 34A34, 34C05, 34C11, 34C45}
\section{Introduction}

Let $\dot{s}=F(s)$, $s=(x,y,z)$ be a smooth vector field of $\mathbb{R}^3$ which continuously extends to $S^3$, with $\infty$ added as a fixed point. Moreover, assume $F$ has a finite number of fixed points in $\mathbb{R}^3$. The question which we tackle in this paper is the following - what is the connection between the bounded and unbounded dynamics of the flow? Or, more precisely, what conditions should $F$ satisfy such that the flow always mixes a neighborhood of $\infty$ and a neighborhood of the fixed points? This question is particularly interesting when the sign of the divergence of $F$ is not constant in $\mathbb{R}^3$ - or put simply, in scenarios when one should not expect $F$ to generate a global attracting (or repelling) invariant set for the flow. \\

Needless to say, the problem of describing the unbounded dynamics of vector fields is not a new one. The most famous approach to study it was originally considered by H. Poincaré - who devised the notion of the Poincaré sphere or Poincaré compactification to do so (see Ch.3.10 in \cite{Perko} and Section 2 in \cite{LiLl} for a survey of these ideas). Informally speaking, given a polynomial vector field $F$ of $\mathbb{R}^3$ the Poincaré sphere method is a way to describe the local dynamics at $\infty$ by expanding $\infty$ to a direction sphere identified with $S^2$, and extending the flow to it. And indeed, since its introduction the Poincaré sphere method was applied extensively to study the unbounded dynamics of many vector fields - see, for example, \cite{LiLl}, \cite{HYCX}, \cite{Yun}, \cite{Mes} and \cite{Mor} among many others.\\

And yet, while teaching us a great deal about the local dynamics at $\infty$ the Poincaré sphere method often cannot teach us too much about the global topological dynamics of the flow. In particular, it often does not teach us how the local dynamics around $\infty$ and those around the fixed points are connected (if at all). It is precisely this gap we address in this paper. Namely, we prove the following Theorem:

\begin{theorem}
\label{in}    Let $\dot{s}=F(s)$, $s=(x,y,z),F=(F_1,F_2,F_3)$ be a $C^k$, $k>0$ vector field satisfying the following:
    \begin{enumerate}
        \item $F$ extends continuously to $\infty$, where $\infty$ is added as a fixed point whose Poincaré index of $F$ at $\infty$ is either $0$ or $\pm1$ (see Definition \ref{index}).
        \item $F$ has finitely many fixed points in $\mathbb{R}^3$, all non-degenerate, at which the linearization has $x_1,\ldots ,x_n$ complex-conjugate eigenvalues.
        \item There exists some $i\in\{1,2,3\}$ such that the level set $H=\{s\in\mathbb{R}^3\mid F_i(s)=0\}$ satisfies the following:
        \begin{itemize}
             \item $H$ is unbounded, and homeomorphic to a plane.
            \item Let $l$ denote the tangency set of the vector field $F$ to $H$. Then, $l$ is homeomorphic to $\mathbb{R}$.
            \item Parameterizing $l=(s_1(t),s_2(t),s_3(t))$, $t\in\mathbb{R}$, then $s_i(t)$ is an increasing function in $t$.
            \item For all $c\in\mathbb{R}$, $H$ is transverse to the plane $H_c=\{s\in\mathbb{R}^3\mid s_i=c\}$ - and in particular, $l_c=H_c\cap H$ is unbounded and homeomorphic to $\mathbb{R}$. Moreover, $l_c\cap l$ is a singleton.
            \item Let $I$ be a segment on $l$ such that there are no fixed points on $I$. Then, for all $s\in I$ there exists some $\epsilon>0$ depending on $s$ such that the surface $\cup_{s\in I}\phi_{(-\epsilon,\epsilon)}(s)$ is either in $\{F_i(s)\geq0\}$ or in $\{F_i(s)\leq0\}$.
            \item Finally, $H\setminus l$ is composed of two (topological) half-planes, $H_+$ and $H_-$ such that on $H_+$ initial conditions cross from $\{F_i(s)\leq0\}$ into $\{F_i(s)>0\}$, and on $H_-$ from $\{F_i(s)\geq0\}$ to $\{F_i(s)<0\}$.
        \end{itemize}
    \end{enumerate}

Then, there exist two fixed points $x_1,x_2$ (not necessarily distinct) such that each fixed point generates a respective one-dimensional invariant manifold, $\Gamma_1$ and $\Gamma_2$, connecting it to $\infty$ (see Fig.\ref{conn}). Moreover, there exists a curve $\gamma\subseteq\mathbb{R}^3$ connecting $x_1$ and $x_2$ such that the union $\{x_1,x_2,\infty\}\cup\Gamma_1\cup\Gamma_2\cup\gamma$ is a knot in $S^3$, ambient isotopic to $S^1$, the unknot. In addition, whenever $F$ generates more than one-fixed point we have $x_1\ne x_2$.
\end{theorem}

\begin{figure}[h]
\centering
\begin{overpic}[width=0.35\textwidth]{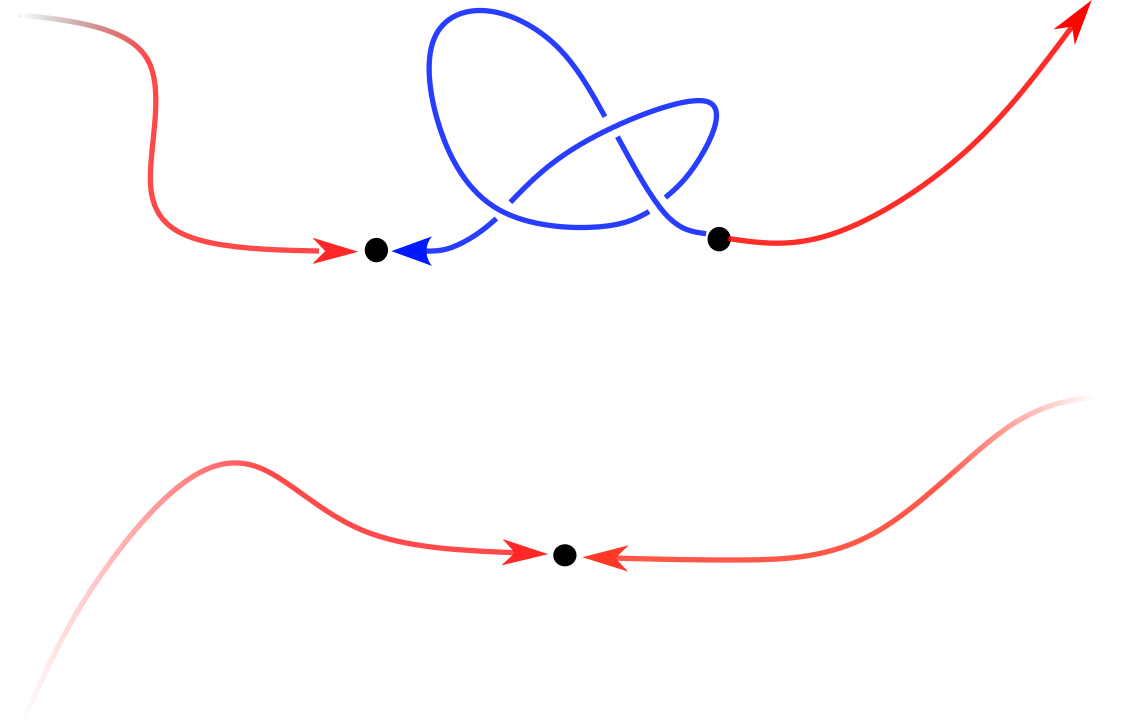}
\put(340,215){$\Gamma_2$}
\put(720,250){$\Gamma_1$}
\put(600,390){$x_2$}
\put(720,500){$\Gamma_2$}
\put(480,200){$x$}
\put(160,500){$\Gamma_1$}
\put(350,390){$x_1$}
\end{overpic}
\caption{\textit{The fixed points $x_1$ and $x_2$ and their unbounded invariant manifolds, sketched in red (in the upper scenario, there also exists a bounded heteroclinic trajectory sketched in blue). In the lower scenario, we have $x_1=x_2=x$.}}\label{conn}

\end{figure}

Despite its technical formulation, Theorem \ref{in} has the following heuristic meaning: let $F$ be a vector field satisfying the assumptions of Theorem \ref{in} and consider two small neighborhoods $B_F$ of the Fixed Points and $B_\infty$ of $\infty$ (with the latter taken in $S^3$) - then, the flow generated by $F$  mixes the trajectories of $B_F$ and $B_\infty$. More precisely, Theorem \ref{in} gives us a sufficient conditions which force $F$ to carry the trajectories of initial conditions from either $B_F$ to $B_\infty$, $B_\infty$ to $B_F$, or both. As such, Theorem \ref{in} teaches us how the local dynamics around the fixed points interact with the local dynamics around $\infty$. Before moving on, we further remark that despite the seemingly long list of assumptions, Theorem \ref{in} is in fact very general - in the sense that it can be applied to study a relatively large class of three dimensional flows in $\mathbb{R}^3$, as exemplified in Section $2$.\\

This paper is organized as follows - in Section $2$ we define the notions and ideas used in this paper, after which we prove Theorem \ref{in} by direct qualitative (and topological) analysis of the vector field $F$. Following that, in Sections \ref{ap1} and \ref{ap2} we apply Theorem \ref{in} to derive several results on the Belousov--Zhabotinsky Reaction, the Genesio--Tesi system, and the Michelson system (see \cite{BZ}, \cite{GT} and \cite{Michh} respectively). We conclude this paper in Section \ref{nonstand}, where we show how one can modify the arguments used to prove Theorem \ref{in} in to study special cases of vector fields which do not satisfy all the assumptions of Theorem \ref{in}. We exemplify that by studying the dynamics of a vector field originally introduced in \cite{Mor}.\\

Finally, before we begin we would like to remark that the proof of Theorem \ref{in} is to a large degree inspired by Proposition 2.3 in \cite{Pi} and by the author's own work in \cite{I} on the Rössler system (see Theorem 2.8). In fact, Theorem \ref{in} originated in an attempt to prove this result forms an instance of a larger theory.

\subsection*{Acknowledgements}
The author would like to thank Konstantin Khanin for his suggestion of this project, and to Noy Soffer-Aranov and Valerii Sopin for their helpful comments and suggestions.

\section{The general theorem:}

In this section we prove Theorem \ref{in}. From now on $F=(F_1,F_2,F_3)$ will always denote a smooth vector field of $\mathbb{R}^3$ and $s\in\mathbb{R}^3$ will always denote a shorthand notation for an initial condition $(x,y,z)$. As mentioned above, we prove Theorem \ref{in} by using direct qualitative analysis of the vector field $F$, and we are particularly interested in the case where $F$ is a polynomial vector field - i.e., each component $F_i$, $i=1,2,3$ is a multivariate polynomial in $x,y,z$. Whenever this is the case, we refer to $F$ as a \textbf{polynomial vector field}. With these ideas in mind, we first prove the following fact about polynomial vector fields - which, even though we will not use directly, will motivate our results later on in this paper:
\begin{proposition}
\label{polyinf} With previous notations    Let $\dot{s}=F(s)$, $F=(F_1,F_2,F_3)$ be a polynomial vector field of $\mathbb{R}^3$ with a finite number of fixed points. Then, whenever $F$ has a finite number of fixed points is $\mathbb{R}^3$, $F$ can be modified around $\infty$ s.t. it extends continuously to $S^3$ with $\infty$ added as a fixed point for the flow. Moreover, this extension is orbitally equivalent to the flow of the original vector field in a punctured neighborhood of $\infty$.
\end{proposition}
\begin{proof}
If $F$ is a linear vector field, the proof is immediate and follows easily by proving that if a matrix is invertible, then whenever $\mid \mid s_n\mid \mid \to\infty$ we have $\mid \mid F(s_n)\mid \mid \to\infty$. Therefore, assume now that $F$ is non-linear - i.e., every $F_i$, $i=1,2,3$ is a multivariate polynomial of degree at most $n$ (for some $n>1$). From now on, we further assume $n>1$ is minimal - i.e., we assume that $n>1$ is such that  exists at least one $i\in\{1,2,3\}$ such that $F_i$ is a multivariate polynomial in $s=(x,y,z)$ of degree exactly $n$. Moreover, setting $\bullet$ as the inner product, observe $F(s)\bullet s=F_1(s)\bullet x+F_2(s)\bullet y+F_3(s)\bullet z$ is also a multivariate polynomial - in particular, we can write $F(s)\bullet s=P(s)$, where $P(s)$ is a multivariable polynomial of degree $n+1$. Now, convert the above equation to spherical coordinates $(r,\theta,\varphi)$ - where $r=\mid \mid (x,y,z)\mid \mid $, $\theta\in[0,\pi]$ and $\varphi\in[0,2\pi)$. By the discussion above we can rewrite the above equation as $F(s)\bullet s=\sum_{i=0}^{n+1}a_i r^{i} g_i(\theta,\varphi)$ - where $(a_0,\ldots ,a_{n+1})$ is a non-zero vector and $g_0,\ldots ,g_{n+1}$ are the products and sums of trigonometric functions in $\theta$ and $\varphi$. In particular, the functions $g_0,\ldots ,g_{n+1}$ are independent of $r$ (some of them are possibly constants).\\

To continue, consider any $r>0$ sufficiently large such that there are no fixed points on the sphere $S_r=\{(x,y,z)\mid \mid \mid (x,y,z)\mid \mid =r\}$ - we now claim there exists some maximal $k$, $n+1\geq k\geq0$, such that $g_k(\theta,\varphi)$ is not identically zero on $S_r$. To see why, assume this is not the case - i.e., assume that for all $n+1\geq k>0$ the function $g_k$ vanishes identically - which implies $F(s)\bullet s=0$ on $S_r$, i.e., $F$ is tangent to $S_r$. This implies the vector field $\frac{F(s)}{\mid \mid F(s)\mid \mid }, s\in S_r$ is tangent to the sphere $S^2$ - and since $r$ was chosen such that $F$ has no fixed points on $S_r$ we conclude $\frac{F(s)}{\mid \mid F(s)\mid \mid }$ has no fixed points on $S^2$ as well. In other words, $\frac{F(s)}{\mid \mid F(s)\mid \mid }$ is a continuous, non-vanishing vector field of $S^2$ - which contradicts the Hairy Ball Theorem. As a consequence, we conclude there exists at least one $k>0$ such that $g_k$ is not identically $0$ - from now on, we assume $n+1\geq k>0$ is maximal with respect to this property.\\

Now, consider $(\theta,\varphi)\in[0,\pi]\times[0,2\pi)$ such that $g_k(\theta,\phi)\ne0$. The above decomposition of $F(s)\bullet s$ implies that $\lim_{r\to\infty}\frac{F(s)\bullet s}{r^{k}g_k(\theta,\varphi)}=1$. We now claim $g_k$ is non-zero in an open and dense subset of $I=[0,\pi]\times[0,2\pi)$ - which will immediately imply $\lim_{r\to\infty}\frac{F(s)\bullet s}{r^{n+1}g_k(\theta,\varphi)}=1$ in an open and dense collection of $(\theta,\varphi)$. We do so by contradiction - to this end, note the set $D=\{(\theta,\varphi)\in I\mid g_k(\theta,\varphi)\ne0\}$ is trivially open, we now prove it is dense (we already know it is non-empty). To this end, note we can decompose $g_k(\theta,\varphi)=\sum_{i=1}^d f_{1,i}(\theta)f_{2,i}(\varphi)$ - where $f_{i,j}$ are trigonometric functions. This implies that $f_{j,i}$ is holomorphic in either $\theta$ or $\varphi$ (respectively) - hence for all $c\in[0,2\pi)$, $d\in[0,\pi]$ the functions $g_k(c,\varphi)$ and $g_k(\theta,d)$ are also holomorphic. Therefore, by the Identity Theorem for holomorphic maps we conclude that given any $c\in[0,2\pi)$ for which there exists an open sub-interval of $[0,\pi]\times\{c\}$ on which $g_k$ vanishes, $g_k$ has to be identically $0$ throughout the line $[0,\pi]\times\{c\}$. Similarly, given any $c\in[0,\pi]$, if there exists some open sub-interval of $\{c\}\times[0,2\pi)$ on which $g_k$ vanishes, it must also vanish throughout $\{c\}\times[0,2\pi)$.\\

As a consequence, it follows that if $D$ is not dense in $I$, there exists $D'$, an open set of $[0,\pi]\times[0,2\pi)$ and some straight line $l$ which is parallel to either $[0,\pi]\times\{0\}$ or to $\{0\}\times[0,2\pi)$ such that $l$ intersects both $D$ and $D'$. By the paragraph above, this implies $g_k$ must vanish throughout $l$ - and in particular, also on $l\cap D$. Since we already know $D\ne\emptyset$ and since in addition for all $(\theta,\varphi)\in D$ we have $g_k(\theta,\varphi)\ne0$ we have a contradiction - i.e., the set $D'$ cannot include an open set, i.e., it is nowhere dense, and consequentially, $D$ is dense. As we already know $D$ is open, it follows $\overline{D}=[0,\pi]\times[0,2\pi]$ which implies that for a dense collection of $(\theta,\varphi)\in [0,2\pi]\times[0,2\pi)$ we have $g_k(\theta,\varphi)\ne0$ - hence we have $\lim_{r\to\infty}\frac{F(s)\bullet s}{r^{k}g_k(\theta,\varphi)}=1$ throughout $D$.\\

It now follows that as $\mid \mid s\mid \mid =r\to\infty$ the limiting behavior of $F(s)\bullet s$ is independent of $r$. In other words, on any sufficiently large sphere $S_r$ the inner product satisfies $F(s)\bullet \frac{s}{\mid \mid s\mid \mid }\approx g_k(\theta,\varphi)$, $s=(r,\theta,\varphi)$ - and in particular, by the continuity of $F$ and $g_k$ we have $\lim_{\mid \mid s\mid \mid =r,r\to\infty}F(s)\bullet\frac{s}{\mid \mid s\mid \mid }=g_k(\theta,\varphi)$. This implies we can add $\infty$ as a fixed point for the flow generated by $F$ - or, in other words, we can extend the vector field $F$ continuously to $\infty$ by adding $\infty$ as a fixed point. Specifically, by "blowing up" $\infty$ into the sphere $S^2$, by the discussion above we note the behavior of $g_k$ on $S^2$ is independent of the magnitude $r$, and depends only on the region of $S^2$ which intersects $(r,\theta,\varphi)$ is. In other words, by shrinking $S^2$ back to $\infty$ we get that the behavior of $F$ around $\infty$ is well-defined. In particular, on rays $\{(r,\theta,\varphi)|r>0\}$ where $g_k(\theta,\varphi)$ is positive, the flow would push towards $\infty$ - conversely, where it is negative, the flow would pull away from $\infty$. Since we can partition a punctured neighborhood of $\infty$ (in $S^3$) to regions where $g_k$ is positive, negative, and vanishes (and since $F$ is both continuous and smooth in that neighborhood), it follows these attracting and repelling directions to $\infty$ are well-defined.\\

To conclude the proof, we now proceed by choosing some sufficiently small punctured neighborhood $N$ of $F$ around $\infty$. As discussed above, we can partition this neighborhood to flow lines which are repelled away or attracted to $\infty$. By decreasing the speed along these flow lines (without changing their topology) as $r\to\infty$ in $N$ we get a modification of $F$ - a new flow, which, by its definition, is orbitally equivalent to the previous one. Moreover, as this new flow naturally extends to $\infty$ by vanishing there, it follows $\infty$ is a fixed point for the said flow. The proof of Proposition \ref{polyinf} is now complete.
\end{proof}
\begin{remark}
    Using $n-$dimensional spherical coordinates (see \cite{hyper}), one can generalize Proposition \ref{polyinf} to $n-$dimensional polynomial vector fields. 
\end{remark}

Before moving on, we remark that in general one should not assume the argument above extends the vector field $F$ smoothly to $\infty$ - or in other words, given a polynomial vector field $\dot{s}=F(s)$ which satisfies the assertions of Proposition \ref{polyinf}, there is no reason to assume $\infty$ will be a smooth fixed point for the flow - this would very much depend on the properties of $g_k$, which cannot be assumed to have "simple" behavior. This leads us to ask the following: since in general we cannot hope to study the dynamics of $F$ at the fixed point at $\infty$ using smooth tools, how can we study it? To this end, we now introduce the following definition, inspired by the Poincaré index of a smooth vector field (see Ch.6 in \cite{Mil}). We first recall that if $x\in\mathbb{R}^3$ is an isolated fixed point for a vector field $F$, the \textbf{Poincaré index} is defined as the degree of $\frac{F}{\mid \mid F\mid \mid }$ on some sufficiently small sphere centered at $x$. Using the homotopy invariance of the degree of sphere maps, we now generalize this definition as follows:
\begin{definition}
    \label{index} Let $F$ be a smooth vector field of $\mathbb{R}^3$ which has a finite number of fixed points and extends continuously to $\infty$ as in Prop.\ref{polyinf}. Let $r>0$ is sufficiently large such that all the fixed points of $F$ are inside the ball $B_r=\{(x,y,z)\mid \mid \mid (x,y,z)\mid \mid <r\}$ and set $S_r=\{(x,y,z)\mid \mid \mid (x,y,z)\mid \mid =r\}$. Then, we define the \textbf{Poincaré index of} $\infty$ as $-d$, where $d$ is the degree of $\frac{F(s)}{\mid \mid F(s)\mid \mid }$ on $S_r$.
\end{definition}

Definition \ref{index} will be instrumental to the proof of Theorem \ref{in} - and it is easy to see it generalizes the notion of the Poincaré index of a smooth fixed point for a flow (see Lemma 6.1 in \cite{Mil}). However, before we state and prove the said Theorem, for completeness, we first show Definition \ref{index} is well-defined. In other words, we first prove the following Lemma:
\begin{lemma}
\label{welldefi}   With the notations of Definition \ref{index}, the Poincaré index at $\infty$ is well-defined - that is, the Poincaré index at $\infty$ does not depend on $r>0$. Moreover, if  $-d\in\{0,1,-1\}$ the sum of the Poincaré indices of the zeros of $F$ inside $S_r$ is $d$.
\end{lemma}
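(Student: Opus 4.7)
The plan is to reduce both claims to standard homotopy invariance and additivity properties of the topological degree for maps $S^2 \to S^2$, combined with a Poincare-Hopf-style excision argument inside $B_r$.

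For the first assertion, let $r^\ast$ be large enough that all fixed points of $F$ lie inside $B_{r^\ast}$, and fix $r_2 > r_1 > r^\ast$. I would define the continuous map
\[
\Phi : [r_1,r_2] \times S^2 \longrightarrow S^2, \qquad \Phi(r,u) = \frac{F(ru)}{||F(ru)||},
\]
which is well-defined since $F$ does not vanish outside $B_{r^\ast}$. This is a homotopy between the radial parameterizations of $\frac{F}{||F||}|_{S_{r_1}}$ and $\frac{F}{||F||}|_{S_{r_2}}$, so by homotopy invariance of the degree the two maps have the same degree, and the value $-d$ appearing in Def.\ref{index} is independent of the chosen radius.

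For the second assertion, I would use the standard excision argument. Since the fixed points $x_1,\ldots,x_n$ are isolated and all lie in the interior of $B_r$, I can choose small pairwise disjoint closed balls $\bar{B}_{\epsilon_i}(x_i)\subset B_r$, each containing no zero of $F$ other than $x_i$. On the compact $3$-manifold with boundary $W = \bar{B}_r \setminus \bigcup_i B_{\epsilon_i}(x_i)$ the field $F$ is non-vanishing, so $\frac{F}{||F||} : W \to S^2$ is continuous. Because a continuous map from a compact oriented manifold-with-boundary to $S^2$ has total degree zero on its boundary, and because the orientation induced on $\partial W$ agrees with the outward orientation on $S_r$ but is opposite to the outward orientation of each $S_{\epsilon_i}(x_i)$, I would obtain
\[
d \;-\; \sum_{i=1}^n \mathrm{ind}(x_i) \;=\; 0,
\]
which yields $\sum_i \mathrm{ind}(x_i) = d$. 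Note that the assumption $-d \in \{0,\pm 1\}$ plays no role in this derivation; it merely delimits the range in which the conclusion will later be invoked in the proof of Th.\ref{in}.

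The only point requiring care is the bookkeeping of orientations in the excision step, so that the signs line up correctly; I do not expect any genuine obstacle beyond this essentially routine application of degree theory.
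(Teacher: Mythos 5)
Your proposal is correct. The first assertion is handled exactly as in the paper: there the author simply observes that $\frac{F}{||F||}:S_{r_1}\to S^2$ and $\frac{F}{||F||}:S_{r_2}\to S^2$ are homotopic once all fixed points lie inside both balls; your map $\Phi(r,u)=\frac{F(ru)}{||F(ru)||}$ just writes that homotopy down explicitly. For the second assertion, however, you take a genuinely different route. The paper smoothly deforms $F$ in $\{||s||>r\}$ so that $\infty$ becomes a smooth fixed point of index $-d$ and then invokes the Poincar\'e--Hopf theorem on $S^3$ (where the indices sum to $\chi(S^3)=0$) to conclude $\sum_i d_i = d$; this is why the hypothesis $-d\in\{0,1,-1\}$ appears, and it ties the lemma to the smoothing construction reused in Cor.\ref{zeroone}. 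You instead work entirely inside $\mathbf{R}^3$: excising small disjoint balls around the $x_i$ and using that $\frac{F}{||F||}$ extends over the compact oriented manifold-with-boundary $W=\bar{B}_r\setminus\bigcup_i B_{\epsilon_i}(x_i)$, so the total boundary degree vanishes and $d-\sum_i\mathrm{ind}(x_i)=0$ after the orientation bookkeeping you describe. Your argument is more elementary and more general -- it needs no hypothesis on $d$, and it avoids the paper's somewhat informally justified step of smoothing the field at $\infty$ -- while the paper's version has the side benefit of introducing the deformation-at-infinity technique it relies on later. Your orientation conventions are consistent with Def.\ref{index} (the index at $\infty$ is $-d$ with $d$ the degree on $S_r$), so the signs do line up, and your remark that the restriction $-d\in\{0,\pm1\}$ is not needed for this conclusion is accurate.
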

\begin{proof}
    Let $r>0$ be sufficiently large as in Definition \ref{index}, let $B_r$ and $S_r$ denote the same sets as above, and set $v=\frac{F}{\mid \mid F\mid \mid }$. Whenever $F$ has no fixed points on $S_r$, $v$ maps $S_r$ smoothly to $S^2$. Now, consider any sufficiently large $r_1,r_2>0$ such that all the fixed points of $F$ are trapped inside $B_{r_1}$ and $B_{r_2}$. For all such $r_1$ and $r_2$ the maps $v:S_{r_1}\to S^2$ and $v:S_{r_2}\to S^2$ are homotopic - which implies they have the same degree as a sphere map. As the Poincaré index at $\infty$ is defined by the degree of $v$ on any sufficiently large sphere $S_r$, this proves the Poincaré index at $\infty$ is well-defined and does not depend on $r$.\\

    To conclude the proof, we need to show that when $d\in\{0,1,-1\}$ and $r>0$ is sufficiently large, the sum of the Poincaré indices of the fixed points inside $\{s\mid \mid \mid s\mid \mid <r\}$ equals $d$. To do so, choose some $r>0$ such that all the fixed points of $F$ are trapped in $B_r$ - additionally, recall that given a smooth vector field $G$ of $S^3$ with fixed points $x_1,\ldots  x_n$ with Poincaré indices $d_1,\ldots ,d_n$, by the Poincaré-Hopf Theorem we have $d_1+\ldots +d_n=0$. Since the Poincaré index of $F$ at $\infty$ is $-d$, by the Th.6.1 in \cite{Mil} it follows that any smoothening of $F$ in $\{s|||s||>r\}$ with non-degenerate zeros will add at most fixed points whose indices sum to $-d$. As such, by the Poincar\'e-Hopf Theorem we conclude that the sum of Poincar\'e Indices at $\{s|||s||<r\}$ can only be $d$, as the sums of the indices in $\{s|||s||<r\}$ and $\{s|||s||>r\}$ must cancel each other. The proof is now complete.
\end{proof}
Having proven Lemma \ref{welldefi}, we now prove its subsequent corollary:
\begin{corollary}
    \label{zeroone} Under the assumptions and notations of Lemma \ref{welldefi}, let $x_1,\ldots ,x_n$ denote the fixed points of $F$ in $\mathbb{R}^3$, and let $d_1,\ldots d_n$ denote their indices. Assume that in addition $\infty$ is added as a fixed point of Poincaré index $-d$, $d\in\{0,1,-1\}$, and that we have the equality $d=\sum_{i=1}^nd_i$. Then, given any sufficiently large $r>0$, it is possible to continuously deform $F$ in $C_r=\{(x,y,z)\mid \mid \mid (x,y,z)\mid \mid >r\}$ such that $\infty$ is deformed into a smooth fixed point for the flow of Poincaré index $0$ or a Saddle Focus.
\end{corollary}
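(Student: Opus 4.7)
The plan is to work in a chart around $\infty$ obtained by an inversion and then perform a standard patching between $F$ and a smooth local model realizing the prescribed index. Let $\psi(s) = s/\|s\|^2$ so that $\psi$ carries $C_r = \{\|s\| > r\}$ diffeomorphically onto the punctured ball $B_{1/r}\setminus\{0\}$, sending $\infty$ to $0$. The pushed-forward vector field $\tilde F = \psi_* F$ extends continuously to $0$ as a fixed point, and by Lemma \ref{welldefi} together with the orientation convention in Definition \ref{index}, the degree of $\tilde F/\|\tilde F\|$ on any small sphere around $0$ equals $-d$.

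Next I would fix a smooth local model $G$ on $\mathbf{R}^3$ that vanishes only at $0$ and realizes the desired index there. When $d = \pm 1$, choose $G$ to be linear with one real eigenvalue and a complex-conjugate pair whose real parts are of opposite sign, so that $0$ is a smooth Saddle Focus of Poincare Index $\pm 1$, as needed. When $d = 0$, take any convenient smooth $G$ with an isolated degenerate fixed point of index $0$ at the origin (for instance, a vector field whose linearization has a single nontrivial Jordan block, perturbed to make $0$ isolated).

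The heart of the argument is to interpolate smoothly between $G$ near $0$ and $\tilde F$ near $\partial B_{1/r}$ without creating new zeros. Pick concentric spheres $S_{\rho_1} \subset S_{\rho_2}$ inside $B_{1/r}$, small enough that both $\tilde F$ and $G$ have $0$ as their only fixed point inside $S_{\rho_2}$. The restrictions $\tilde F/\|\tilde F\|$ and $G/\|G\|$ to these spheres are smooth maps $S^2 \to S^2$ of the same degree $-d$, and hence are smoothly homotopic. I would combine such a homotopy with a smooth radial cutoff to produce a smooth nonvanishing vector field on the closed shell $\{\rho_1 \le \|u\| \le \rho_2\}$ matching $G$ on the inner boundary and $\tilde F$ on the outer boundary. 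Defining the deformed field to equal $G$ inside $S_{\rho_1}$, the interpolation on the shell, and $\tilde F$ outside $S_{\rho_2}$, and then pulling back by $\psi^{-1}$, yields a smooth deformation of $F$ supported in $C_r$ under which $\infty$ becomes a smooth fixed point of the advertised type.

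The step I expect to be the most delicate is verifying that the interpolation on the shell is genuinely nonvanishing, since otherwise spurious fixed points would be created and the index count at $\infty$ would be disturbed. This is precisely where the hypothesis $d \in \{0, \pm 1\}$ enters: for these three degrees one can write down explicit homotopies of sphere maps which, after multiplication by a positive radial profile, yield nowhere-zero vector fields on the annulus that glue smoothly to $G$ and $\tilde F$ at the two boundary spheres. Once this is achieved, the total index of the interior fixed points of the deformed vector field is unchanged, and by construction $\infty$ is either a smooth saddle-focus or a smooth degenerate fixed point of Poincare Index $0$, which is exactly the claim.
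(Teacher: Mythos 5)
Your proposal is correct and follows essentially the same route as the paper: both arguments rest on Hopf's theorem (equal degree of the direction maps on large spheres implies smooth homotopy) and then deform $F$ only in $C_r$ so that $\infty$ is modeled on a standard smooth fixed point — a saddle focus when $d=\pm1$ and an isolated index-$0$ point when $d=0$ — with your inversion chart and shell interpolation simply making explicit what the paper leaves as a sketch. One small correction: the hypothesis $d\in\{0,\pm1\}$ is not needed to make the interpolating homotopy nonvanishing (Hopf's theorem provides that for any matching degree, and multiplying the direction homotopy by a positive radial profile never creates zeros); it is used only to guarantee that the local model at $\infty$ can be chosen to be a saddle focus or an index-$0$ fixed point as the statement requires.
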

\begin{proof}
  We first recall that by Hopf's theorem, two smooth maps $f,g:S^2\to S^2$ are homotopic if and only if they have the same degree. Now, choose any $r>0$ that is sufficiently large such that $C_r$ includes no fixed points for $F$ - this implies that for all $r'>r$, the maps $\frac{F}{\mid \mid F\mid \mid }:S_r\to S^2$, $\frac{F}{\mid \mid F\mid \mid }:S_{r'}\to S^2$ are homotopic and have the same degree. This implies that whenever the Poincaré index at $\infty$ is $0$ we can homotopically deform the dynamics inside $C_r$ such that $\infty$ becomes a smooth fixed point of Index $0$, as illustrated in Fig.\ref{ind}. Moreover, we can perform the smoothening such that $\infty$ is the only fixed point for the flow as illustrated in Fig.\ref{ind} - and hence, must be of Poincaré index $0$.\\

\begin{figure}[h]
\centering
\begin{overpic}[width=0.3\textwidth]{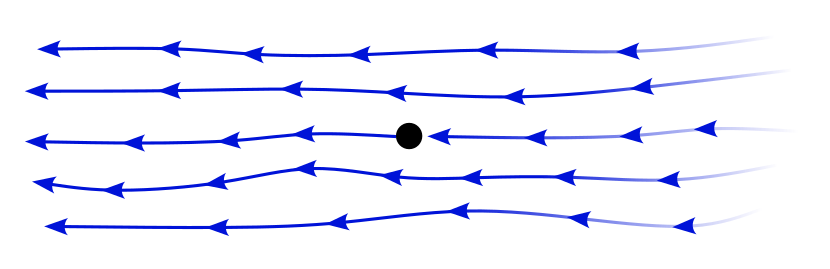}
\end{overpic}
\caption[An (almost) tubular flow at $\infty$.]{\textit{A $0$--index fixed point.}}\label{ind}

\end{figure}

    When $d=\sum_{i=1}^nd_i=\pm1$ the situation is similar, i.e., using a similar argument to the one above, we make $\infty$ into a fixed point for the flow of Poincaré index $-d$ - where $d$ is either $1$ or $-1$. We now recall that given a Saddle Focus fixed point $p$ for a flow, its Poincaré index will be either $1$ or $-1$ - depending on the sign of the Jacobian determinant at $p$. Therefore, using a similar argument to the one above, Cor.\ref{zeroone} now follows.
\end{proof}

Having proven Cor.\ref{zeroone}, we are finally ready to state and prove Theorem \ref{in} from the Introduction. From now on, given any smooth vector field $F$ of $\mathbb{R}^3$, denote by $\phi_s(t)$ the solution curve passing through $s$ at times $t$ - parameterized such that $\phi_s(0)=s$. More generally, given any set $S\subseteq\mathbb{R}^3$, we will often denote by $\phi_t(S)$ its image by the flow at time $t$ - and given an interval $(a,b)$, we will often denote the collection of flow lines connecting $\cup_{s\in S}\phi_{a}(s)$ and $\cup_{s\in S}\phi_b(s)$ by $\phi_{(a,b)}(S)$. In addition, recall that given any compact curve $\gamma\subseteq\mathbb{R}^3$ such that for all $s\in\gamma$, $F(s)\ne0$, there exists some $t>0$ for which $\phi_{(-t,t)}(\gamma)$ forms a surface. With those ideas in mind, we now prove:
\begin{theorem}
  \label{infinitytheorem}  Let $\dot{s}=F(s)$, $s=(x,y,z),F=(F_1,F_2,F_3)$ be a smooth vector field of $\mathbb{R}^3$ satisfying the following geometric scenario (see Fig.\ref{comp}):
    \begin{enumerate}
        \item $F$ extends continuously to $\infty$ - where $\infty$ is added as a fixed point for the flow whose Poincaré index is either $0$ or $\pm 1$.
        \item $F$ has a finite number of fixed points in $\mathbb{R}^3$, all of which are non-degenerate and have complex-conjugate eigenvalues.
        \item There exists some $i\in\{1,2,3\}$ such that the velocity level set $H=\{s\in\mathbb{R}^3\mid F_i(s)=0\}$ satisfies the following (see Fig.\ref{comp}):
        \begin{itemize}

            \item $H$ is unbounded, and homeomorphic to a plane.
            \item Let $l$ denote the tangency set of the vector field $F$ to $H$. Then, $l$ is homeomorphic to $\mathbb{R}$.
            \item Parameterizing $l=(s_1(t),s_2(t),s_3(t))$, $t\in\mathbb{R}$, then $s_i(t)$ is an increasing function in $t$.
            \item For all $c\in\mathbb{R}$, $H$ is transverse to the plane $H_c=\{s\in\mathbb{R}^3\mid s_i=c\}$ - and in particular, $l_c=H_c\cap H$ is unbounded and homeomorphic to $\mathbb{R}$. Moreover, $l_c\cap l$ is a singleton.
            \item Let $I$ be a segment on $l$ such that there are no fixed points on $I$. Then, for all $s\in I$ there exists some $\epsilon>0$ depending on $s$ such that the surface $\cup_{s\in I}\phi_{(-\epsilon,\epsilon)}(s)$ is either in $\{F_i(s)\geq0\}$ or $\{F_i(s)\leq0\}$.
            \item Finally, $H\setminus l$ is composed of two (topological) half-planes, $H_+$ and $H_-$ such that on $H_+$ initial conditions cross from $\{F_i(s)\leq0\}$ into $\{F_i(s)>0\}$, and on $H_-$ from $\{F_i(s)\geq0\}$ to $\{F_i(s)<0\}$.

        \end{itemize}
    \end{enumerate}

Then, there exist at least two fixed points $x_1,x_2$ (which possibly coincide) such that each fixed point generates a one-dimensional invariant manifold, $\Gamma_i$, $i=1,2$ (respectively) connecting it to $\infty$ (see Fig.\ref{conn}). Moreover, there exists a curve $\gamma\subseteq\mathbb{R}^3$ such that the union $\{x_1,x_2,\infty\}\cup\Gamma_1\cup\Gamma_2\cup\gamma$ is the unknot.  
\end{theorem}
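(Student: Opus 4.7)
My strategy is to combine a Ważewski-type trapping argument on the tangency curve $l$ with the index constraint at $\infty$ provided by Cor.\ref{zeroone}. I would begin by applying Cor.\ref{zeroone} to smoothly deform $F$ outside a sufficiently large ball so that $\infty$ becomes a smooth fixed point of the prescribed Poincaré index $-d\in\{0,\pm 1\}$; this leaves the dynamics around $H$ and around the finite fixed points unchanged but lets us treat $\infty$ as an ordinary smooth fixed point of $S^3$. Because each finite fixed point is non-degenerate with a complex-conjugate pair of eigenvalues, it is automatically a saddle-focus, carrying a one-dimensional invariant manifold (the eigenline of the remaining real eigenvalue) with two branches, together with a transverse two-dimensional spiraling manifold. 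Proving Th.\ref{infinitytheorem} therefore reduces to exhibiting at least two such one-dimensional branches whose $\alpha$- or $\omega$-limit set is $\{\infty\}$, possibly attached to the same fixed point.

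Next I would exploit the geometry of $H$. Parameterize $l=(s_1(t),s_2(t),s_3(t))$ so that $s_i$ is strictly increasing; then $s_i(t)\to\pm\infty$ as $t\to\pm\infty$, and both ends of $l$ accumulate at $\infty\in S^3$. On every fixed-point-free subarc of $l$, the local tangency hypothesis forces the flow to graze $H$ from a single side of $\{F_i=0\}$, and together with the $H_\pm$ crossing dichotomy this pins down the side of $H$ on which $\phi_s(l(t))$ lies for small $|s|$. Trajectories of $l(t)$ with $|t|$ large are therefore pushed by the local structure at $\infty$ in a controlled direction and remain in an $S^3$-neighborhood of $\infty$ on at least one time side.

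The central step is a connectedness argument on the parameter $t$. Defining
\begin{equation*}
E^+_\infty = \{t\in\mathbb{R}:\phi_s(l(t))\text{ leaves every compact subset of }\mathbb{R}^3\text{ as }s\to+\infty\},
\end{equation*}
and $E^-_\infty$ analogously for $s\to-\infty$, both sets are open by continuous dependence on initial conditions, contain all sufficiently large $|t|$, yet cannot exhaust $\mathbb{R}$, since $l$ must pass through or arbitrarily close to bounded saddle-foci whose invariant manifolds meet $l$ given the transversality $H\pitchfork H_c$. Connectedness of $\mathbb{R}$ forces $\partial E^+_\infty\neq\emptyset$; for any $t_\star\in\partial E^+_\infty$ an $\omega$-limit argument, together with the fact that the bounded fixed points are the only admissible limit sets in this geometric configuration, shows that the forward trajectory of $l(t_\star)$ accumulates on a unique finite fixed point $x_1$ and lies on its one-dimensional invariant manifold $\Gamma_1$. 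The symmetric argument on $E^-_\infty$ produces $\Gamma_2$ at some $x_2$; the Poincaré index sum from Lemma \ref{welldefi}, combined with the one-sided $H_\pm$ entry/exit geometry, then rules out $\Gamma_1$ and $\Gamma_2$ being attached to the same fixed point when $F$ has more than one finite fixed point.

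The hardest step, in my view, is showing that the boundary trajectory produced by the Ważewski argument lands \emph{precisely} on the one-dimensional invariant manifold of some saddle-focus, rather than spiralling into its two-dimensional manifold or accumulating on a more exotic bounded invariant set; ruling this out demands the full strength of the transversality assumptions relating $H$ to the planes $H_c$ together with the saddle-focus spectral structure. For the final unknot claim, the set $\{x_1,x_2,\infty\}\cup\Gamma_1\cup\Gamma_2$ is a tamely embedded arc in $S^3$, and since every tame arc in $S^3$ is ambient isotopic to a straight segment I can pick $\gamma\subset\mathbb{R}^3$ joining $x_1$ and $x_2$ disjointly from $\Gamma_1\cup\Gamma_2$ so that the resulting simple closed curve is ambient isotopic to the unknot $S^1$.
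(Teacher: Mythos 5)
Your central Wa\.zewski-style step is where the argument breaks down. First, $E^+_\infty$ is not obviously open: continuous dependence on initial conditions only controls finite time intervals, so ``leaves every compact set as $s\to+\infty$'' is not an open condition unless you first build a forward-trapping structure near $\infty$, and the hypothesis that $\infty$ has Poincar\'e index $0$ or $\pm1$ (even after the smoothing of Cor.\ref{zeroone}) gives no attraction or repulsion there. Second, and more seriously, the conclusion you draw at a boundary parameter $t_\star$ is exactly the missing content of the theorem: nothing in your argument prevents the $\omega$-limit set of $\phi_t(l(t_\star))$ from being a periodic orbit or some other bounded invariant set, and even if it is a fixed point $x_1$, convergence to $x_1$ generally places the trajectory on the \emph{two}-dimensional invariant manifold, not the one-dimensional one (your claim that non-degeneracy plus a complex pair forces a saddle-focus is also false -- in the Belousov--Zhabotinsky application one fixed point is a complex sink). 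Finally, even granting all of that, your $\Gamma_1$ is a forward-asymptotic orbit to $x_1$; the theorem needs an invariant curve \emph{connecting $x_1$ to $\infty$}, and you never show the backward trajectory of $l(t_\star)$ escapes to $\infty$. You flag the first of these issues as ``the hardest step,'' but it is not an optional refinement: it is the theorem.

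The paper's proof avoids these problems by working at the fixed point rather than along a limiting trajectory. It takes $x_1$ to be the fixed point extremizing the $i$-th coordinate on $l$, follows the fixed-point-free end $l_1$ of $l$ under the first-hit map to $H_-\cup(H_1\cap\{F_i>0\})$, and uses the resulting surface of flow segments $V$ together with the plane $H_1=\{s_i=c_1\}$ to trap a topological cone $C_1$ with tip at $x_1$ which no trajectory can enter; Hartman--Grobman (after a small smoothing near $x_1$ and near $\infty$, then a limiting argument $F'\to F$) forces $C_1$ to contain a one-dimensional invariant manifold, and the sign-definiteness of $F_i$ on $C_1\subseteq\{F_i\geq0\}\cap\{s_i\leq c_1\}$ makes the $i$-th coordinate monotone along it, so it runs to $\infty$. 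That same monotonicity, plus the fact that $\Gamma_1$ and $\Gamma_2$ lie in disjoint half-spaces, is what yields tameness and the unknot conclusion; your appeal to ``every tame arc is isotopic to a segment'' assumes precisely the tameness at $\infty$ that must be proved.
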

\begin{proof}
We first prove the existence of the fixed point $x_1$ and the invariant manifold $\Gamma_1$ - the proof for the existence of $x_2$ and $\Gamma_2$ is similar, and we will indicate how it is done towards the end of the proof. Before giving a sketch of what lies ahead, we first note that given $F,H$ and $l$ as above, there are precisely two possibilities:
\begin{itemize}
    \item \textbf{The non-generic case} - $F$ is tangent to some unbounded arc $\gamma\subseteq l$ connecting some fixed point $p$ and $\infty$.
    \item \textbf{The generic case} - there is no such arc $\gamma\subseteq l$ as described above such that $F$ is tangent to $\gamma$.
\end{itemize}

It is easy to see that whenever there exists some $\gamma$ as in the first possibility, $\gamma$ has to be an unbounded flow line connecting $\infty$ to $p$ - which makes $\gamma$ some invariant manifold connecting $p$ and $\infty$ (and being a flow line, by definition it is one-dimensional). Or, in other words, whenever the non-generic case holds the conclusion of Theorem \ref{infinitytheorem} follows by setting $\Gamma_1=\gamma$, $x_1=p$. Similarly, by considering $l\setminus\gamma$ a similar argument accounts for the existence of $x_2$ and $\Gamma_2$ in the non-generic case. Therefore, to prove Theorem \ref{infinitytheorem} it would suffice to prove it in the generic case - i.e., from now on we always implicitly assume there is no unbounded arc $\gamma\subseteq l$ connecting any fixed-point $x$ to $\infty$ such that $F$ is tangent to $\gamma$. \\

Having said that, we now give a (very) brief overview of the proof of Theorem \ref{infinitytheorem} for the generic case. The proof itself will be rather technical, and as such we divide it into several stages:

\begin{itemize}
    \item In Stage $I$ we begin by performing basic qualitative analysis of the vector field $F$, based on its properties in the premise of Theorem \ref{infinitytheorem}.
    \item In Stage $II$ we use the said analysis to prove the existence of $\Gamma_1$ under idealized assumptions. We do so by constructing three-dimensional bodies using flow lines emanating from $l$, from which trajectories can only escape.
    \item In Stage $III$ we prove the idealized assumptions can be removed, thus implying the general existence of the invariant manifold $\Gamma_1$.
    \item Finally, in Stage $VI$ we indicate how the same ideas and analogous arguments imply the existence of $\Gamma_2$, as well as analyze the knotting properties of $\Gamma_1$ and $\Gamma_2$. This concludes the proof.
\end{itemize}

\begin{figure}[h]
\centering
\begin{overpic}[width=0.5\textwidth]{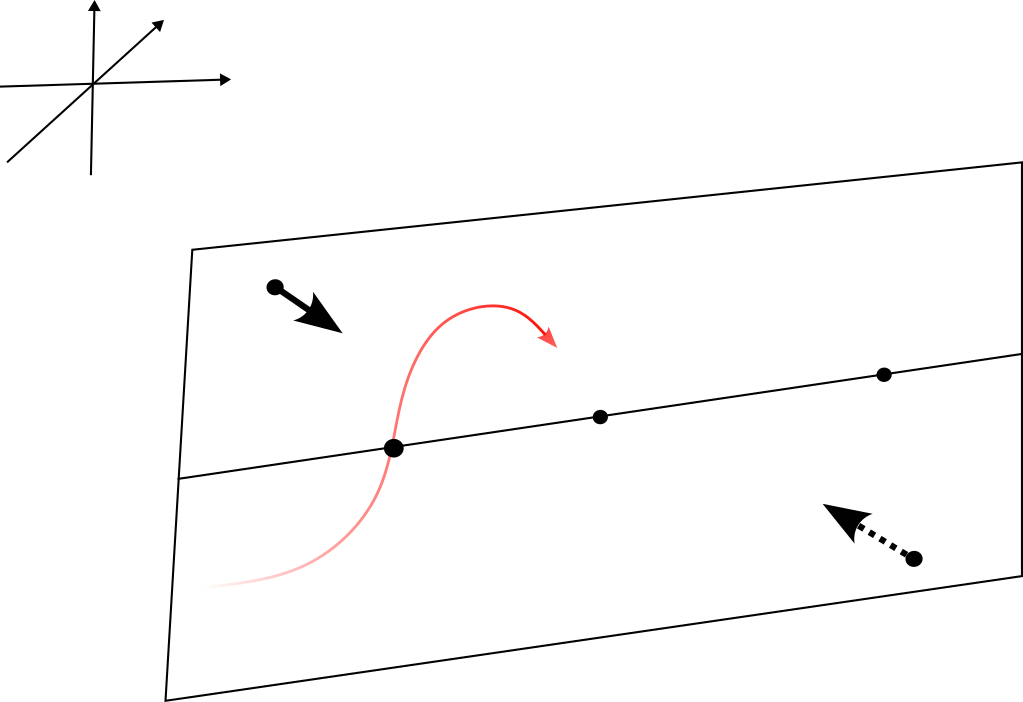}
\put(550,230){$x_1$}
\put(820,340){$x_2$}
\put(450,290){$l_1$}
\put(920,350){$l_2$}
\put(710,320){$l_3$}
\put(720,450){$H_+$}
\put(210,70){$H_-$}
\put(230,595){$x$}
\put(165,680){$y$}
\put(75,700){$z$}
\end{overpic}
\caption{\textit{A sketch of $H=H_+\cup H_-,l_1,l_2,x_1$ and $x_2$ in Case $A$ - along with a red flow line tangent to $l_2$ - along with the direction of the vector field $F$ on $H$. By definition, $l$ is the straight line corresponding to $l_1\cup l_2\cup l_3$.}}\label{comp}

\end{figure}

\subsection{Stage I - basic qualitative analysis}. 

As indicated above, in this section we perform basic qualitative analysis of the vector field $F$. To do so, recall the level set $H=\{s\in\mathbb{R}^3\mid F_i(s)=0\}$ defined above. By assumption, this set is unbounded and homeomorphic to a plane (as illustrated in Fig.\ref{comp}). Without any loss of generality, from now on we always assume $i=1$, i.e., that we have the equality $H=\{s\mid \dot{x}(s)=0\}$, defined by $s=(x,y,z)$, and $(F_1(s),F_2(s),F_3(s))=(\dot{x}(s),\dot{y}(s),\dot{z}(s))$. To continue, recall that per our assumption on $F$ we know the following holds:
\begin{itemize}
    \item Given any $c\in\mathbb{R}$, $H$ is transverse to the plane $H_c=\{(c,y,z)\mid y,z\in\mathbb{R}\}$.
    \item For all $c\in\mathbb{R}$, $H\setminus H_c$ is divided into two components (see Fig.\ref{comp2}).
    \item Recall $l$, the tangency line of $F$ to $H$ - then, $H_c\cap l$ is a singleton, corresponding to the transverse intersection of the curves $H\cap H_c$ and $l$.
    \item Parameterizing $l=(s_1(t),s_2(t),s_3(t))$, $t\in\mathbb{R}$, then $s_1$ is increasing with $t$.
\end{itemize}

As $l$ is the tangency set of $F$ to the set $\{s\mid \dot{x}(s)=0\}=H$ and since $F$ only has a finite number of fixed points, it follows all the fixed points of $F$ lie on $l$. Similarly, as $s_1$ is an increasing function there must be a fixed point $x_1\in l$ which minimizes the $x$--coordinate among all the fixed points. As such, from now on we choose $x_1$ to be that fixed point (later on, we analogously choose $x_2$ to be the point which maximizes the $x$--coordinate). To continue, write $x_1=(c_1,c_2,c_3)$, and let $l_1\subseteq l$ denote the sub-curve of $l\cap\{(x,y,z)\mid x<c_1\}$ connecting $x_1$ to $\infty$ (it exists per our assumptions on $l$). Per the discussion above, we know $l_1$ is not a flow line. Moreover, since $x_1$ minimizes the $x$ coordinates of all the fixed points, it follows that for all $s\in l_1$ we have $F(s)\ne0$ (see Fig.\ref{comp}).\\

To continue, set $H_1=\{(c_1,y,z)\mid y,z\in\mathbb{R}\}$. By the above $H_1$ is transverse to $H$ and $H\cap H_1$ has to be a curve homeomorphic to $\mathbb{R}$ (see Fig.\ref{comp2}). Now, recall we denote the inner product by $\bullet$. As the normal vector to $H_1$ is $(1,0,0)$ and recalling  $H=\{s\mid \dot{x}(s)=0\}=H$ (i.e., $H$ is the vanishing set of $\dot{x}(s)$ which is defined by the function $F_1(s)$), we conclude two things (see Fig.\ref{comp2}):

\begin{itemize}
    \item  For $s\in H_1\cap\{F_1(s)>0\}$ we have $F(s)\bullet(1,0,0)=F_1(s)>0$ - in other words, on the half-plane $H_1\cap\{F_1(s)>0\}$ the vector field $F$ points in the $(1,0,0)$ direction (but not necessarily parallel to it).
    \item Similarly, for $s\in H_1\cap\{F_1(s)<0\}$ we have $F(s)\bullet(1,0,0)<0$ - in other words, on the half-plane $H_1\cap\{F_1(s)<0\}$ the vector field $F$ points in the $(-1,0,0)$ direction (and again, not necessarily parallel to it).
\end{itemize}

\begin{figure}[h]
\centering
\begin{overpic}[width=0.5\textwidth]{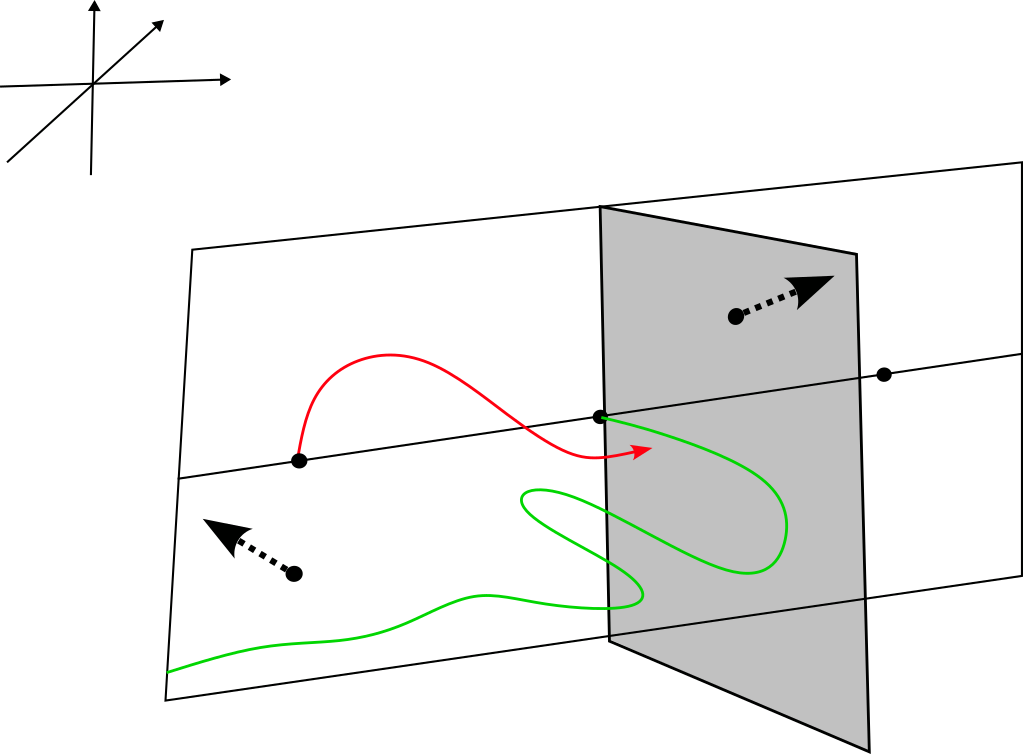}
\put(545,350){$x_1$}
\put(820,340){$x_2$}
\put(400,330){$l_1$}
\put(920,350){$l_2$}
\put(670,450){$H_1$}
\put(180,150){$H_-$}
\put(230,645){$x$}
\put(165,720){$y$}
\put(80,745){$z$}
\end{overpic}
\caption{\textit{Case $A$ - the plane $H_1$ is transverse to $H$, and the forward trajectory of every initial condition in $l_1$ eventually hits either $H_-$ or $H'=H_1\cap\{F_1(s)>0\}$ transversely (where $H'$ is the shadowed half-plane). The green curve denotes the points where initial conditions from $l_1$ hit $H_-\cup H_1$. On $H_1$ the vector field points into $\{x>c_1\}$.}}\label{comp2}

\end{figure}

To continue, recall that given any $s\in\mathbb{R}^3$ we denote its flow line with respect to $F$ by $\phi_t(s)$, parameterized such that $\phi_0(s)=s$. Further recall that by our assumptions on the vector field $F$, for all $s\in l_1$ there exists some maximal time interval $\infty\geq t(s)>0$ such that the union of flow lines $\cup_{s\in l_1}\phi_{(-t(s),t(s))}(s)$ lies wholly inside either $\{F_1(s)\geq0\}$ or $\{F_1(s)\leq0\}$ (see Fig.\ref{comp2}). Additionally, recall we denote by $H_+$ the component of $H\setminus l$ at which trajectories cross from $\{\dot{x}\leq0\}$ into $\{\dot{x}>0\}$ - and by $H_-$ the component of $H\setminus l$ at which trajectories cross from $\{\dot{x}\geq0\}$ to $\{\dot{x}<0\}$ (see Fig.\ref{comp}). This implies precisely one of the following must hold throughout $l_1$ (see Fig.\ref{comp2}):

\begin{enumerate}
    \item \textbf{\label{time} Case $A$ -} for all $s\in l_1$, we have $\phi_{(-t(s),t(s))}(s)\subseteq\{\dot{x}(s)\geq0\}$. In this case, the forward trajectory of every $s\in l_1$ either hits transversely the (topological) half-plane $H_-$ and enters $\{\dot{x}(s)<0\}$, or it hits transversely $H_1\cap \{\dot{x}(s)>0\}$ and enters the region $\{x>c_1\}\cap \{F_1(s)>0\}$ (see Fig.\ref{comp2}).
    \item \textbf{Case $B$} - for all $s\in l_1$ the curve $\phi_{(-t(s),t(s))}(s)$ is in $\{\dot{x}(s)\leq0\}$. In this case the backwards trajectory of every $s\in l_1$ either hits transversely $H_-$ and enters $\{\dot{x}(s)>0\}$ in backwards time, or it hits transversely $H_1\cap \{\dot{x}(s)<0\}$ and enters $\{x>c_1\}\cap \{F_1(s)<0\}$ in backwards time.
\end{enumerate}

In what follows we will only show how to deal with Case $A$ - the proof for Case $B$ is very similar, and will be briefly sketched in Stage $III$ of the proof. To motivate our argument, for every $s\in l_1$ define $p(s)$ to be the first positive time such that $\phi_{p(s)}(s)\in H_-\cup(H_1\cap\{F_1(s)>0\})$.  Also, from now on set $V=\cup_{s\in l_1}\phi_{p(s)}(s)$ and set $Q_1=H_-\cup(H_1\cap\{F_1(s)>0\})$. By the above, $F$ is transverse to $Q_1$, and denoting by $R_1 \{\dot{x}\geq0\}\cap\{x<c_1\}$ the quadrant trapped between $H$ and $H_1$, it is clear that $Q_1\subseteq \partial R_1$, and that $Q_1$ is the maximal set on $\partial R_1$ on which $F$ points outside of $R_1$. In addition, by the definition of Case $A$, for all $s\in l_ 1$ the time function $p(s)$ is well-defined and non-zero (see Fig.\ref{comp2}). This implies the existence of a first-hit map $f:l_1\to V\cap Q_1$ defined by $f(s)=\phi_{p(s)}(s)$. Since $F$ is transverse to $Q_1$ and as $\phi_{p(s)}(s)\in Q_1$ for all $s\in l_1$, it follows that when the trajectory of $s$ flows to $\phi_{p(s)}(s)$ without any tangent intersections with $l_1$, $f$ is continuous around $s$. As we will see in the next section, the analysis of this function $f$ (and in particular, its discontinuities) will form the backbone for the proof of Theorem \ref{infinitytheorem}.\\

\begin{figure}[h]
\centering
\begin{overpic}[width=0.35\textwidth]{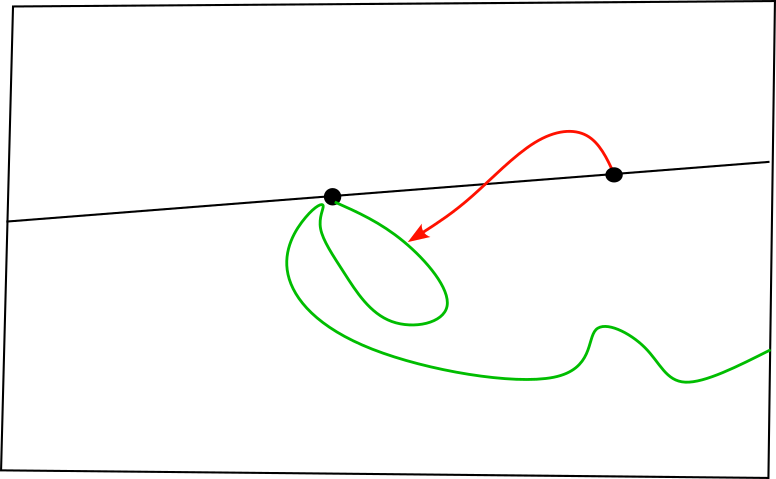}
\put(245,380){$l_2$}
\put(400,390){$\infty$}
\put(920,425){$l_1$}
\put(180,150){$H_-$}

\end{overpic}
\caption{\textit{The idealized assumptions - the set $V$ connects with $Q_1$ in a green curve in some neighborhood of $\infty$.}}\label{cone}

\end{figure}
\subsection{Stage II - proving the Theorem under idealized conditions.}
We are now ready to prove the existence of the invariant manifold $\Gamma_1$ for the fixed point $x_1$ in Case $A$, which we first do under idealized assumptions on $F$ (we will deal with the general case in Stage $III$ of the proof). To motivate this idealized scenario, note that since $l_1$ connects $x_1$ and $\infty$, in the ideal case we would expect $V$ to be "well-behaved" around $\infty$. More precisely, in the ideal sccenario we would expect $f$ and $x_1$ to satisfy the following two conditions (see Fig.\ref{cone}):

\begin{itemize}
    \item  $f$ is continuous around $x_1$ and satisfies $\lim_{s\to x_1}f(s)=x_1$.
    \item $\lim_{s\to\infty} f(s)=\infty$, i.e., we can extend $f$ to $\infty$ by setting $f(\infty)=\infty$.
    \item $f$ is continuous on some neighborhood of $\infty$ in $l_1$.
    \item The Jacobian matrix at $x_1$ has no purely imaginary eigenvalues - i.e., the two complex eigenvalues have non-zero real part, which makes $x_1$ either a saddle focus or a complex sink or source.
\end{itemize}

\begin{figure}[h]
\centering
\begin{overpic}[width=0.35\textwidth]{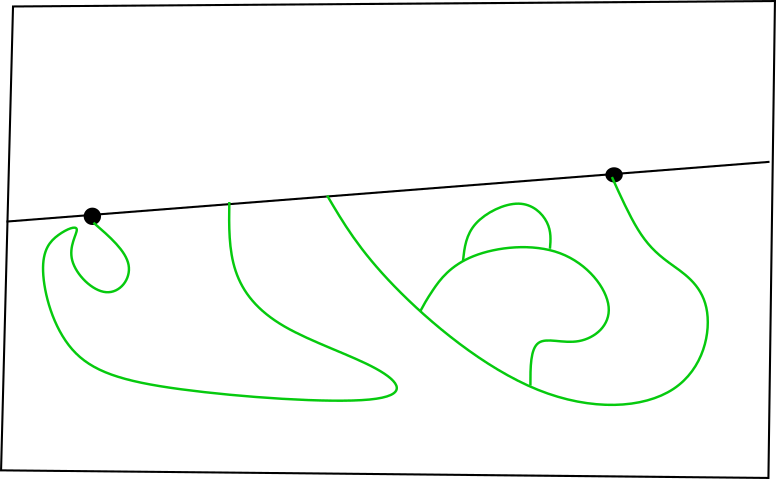}
\put(100,370){$\infty$}
\put(400,390){$l_1$}
\put(800,425){$x_1$}
\put(180,150){$H_-$}

\end{overpic}
\caption{\textit{The green curve denotes the intersection of $\overline{V}$ with $Q_1$ - in this scenario $V$ is not a surface and the curve is branched (that being said, $V$ is still a two-dimensional set). For simplicity, in this scenario we assume this intersection lies entirely in $H_-$.}}
\label{disc2}

\end{figure}

From now on we always refer to these four assumptions as the \textbf{idealized assumptions}, and as remarked earlier, in this stage of the proof we prove the existence of $\Gamma_1$ under the assumption that $F$ also satisfies these extra assumptions. To begin, we first note the set $V$ is a two-dimensional set composed of all the flow lines connecting $l_1$ and $f(l_1)\subseteq Q_1$. Per the discussion above, we conclude that under the idealized assumptions there are exactly two possible scenarios:

\begin{enumerate}
    \item  $f(l_1)$ is a curve in $Q_1$ beginning at $x_1$ and ending at $\infty$, which does not intersect $l_1$ - in other words, $f(l_1)$ is homeomorphic to an open interval with one end at $x_1$ and another at $\infty$. In this case, it is clear $V$ is a surface homeomorphic to a disc, composed of all the flow lines connecting $l_1$ to $f(l_1)$ (see Fig.\ref{disc1}). As $F$ is transverse to $Q_1$, it would follow that in this scenario, $f:l_1\to Q_1$ is continuous throught $l_1$.
    \item $f(l_1)$ is not a connected curve, but rather a disjoint collection of curves. By the discussion at the end of Stage $I$, this can only occur if  the trajectory of some initial condition $s\in l_1$ hits $l_1$ tangently before hitting $Q_1$ transversely at $f(s)$ (see Fig.\ref{disc2}). That being said, in this case the set $V$ would still be two dimensional (even if not a surface).
\end{enumerate}

\begin{figure}[h]
\centering
\begin{overpic}[width=0.35\textwidth]{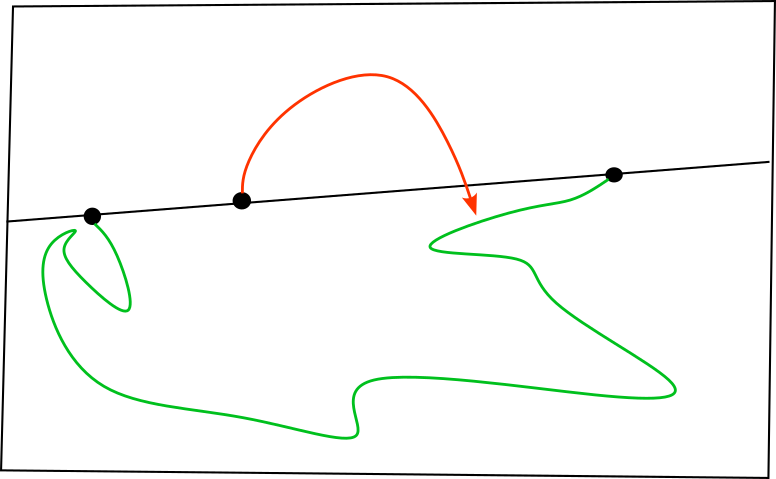}
\put(100,370){$\infty$}
\put(400,390){$l_1$}
\put(800,425){$x_1$}
\put(180,150){$H_-$}

\end{overpic}
\caption{\textit{The green curve denotes the intersection of $\overline{V}$ with $Q_1$ - in this scenario $V$ is a surface hence the curve is homeomorphic to an interval. For simplicity, in this scenario we assume this intersection lies entirely in $H_-$.}}\label{disc1}

\end{figure}

We claim that in each of these two scenarios, the set $V$ and $Q_1$ trap between themselves a three-dimensional body $C_1$ whose boundary is made either of flow lines in $V$ or points on $Q_1$ (later, we will prove no trajectories can enter $C_1$). In the first scenario, i.e., when $f(l_1)$ is a connected curve, this is immediate, as $C_1$ is just the three-dimensional set trapped between $Q_1$ and the flow lines on $V$ connecting initial conditions in $l_1$ to $f(l_1)$ (see Fig.\ref{cone}). On the other hand, when $f$ has discontinuities as in the second scenario, we need a more technical argument, which we will now prove.\\ 

\begin{figure}[h]
\centering
\begin{overpic}[width=0.5\textwidth]{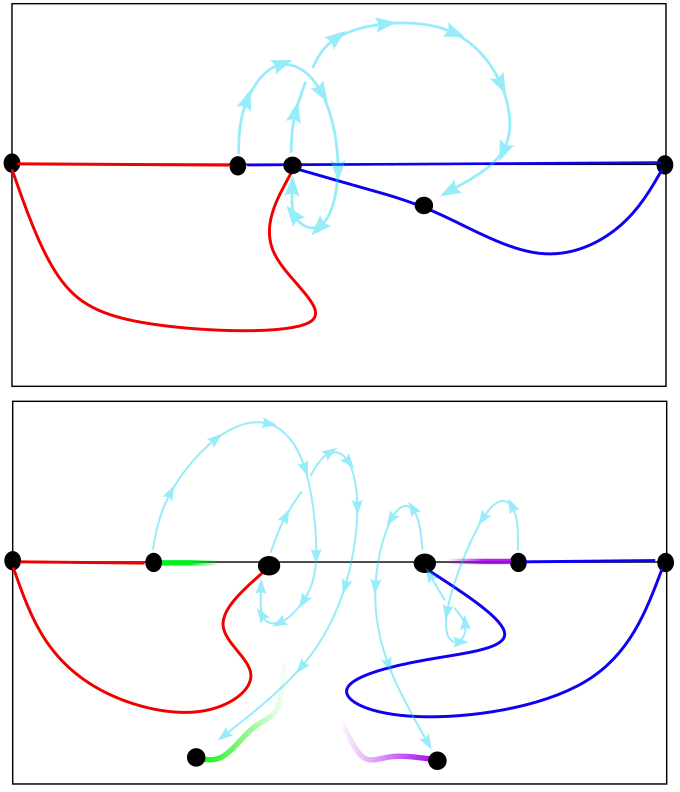}
\put(870,850){$H_+$}
\put(870,650){$H_-$}
\put(870,350){$H_+$}
\put(870,150){$H_-$}
\put(40,265){$\infty$}
\put(40,765){$\infty$}
\put(790,265){$x_1$}
\put(790,765){$x_1$}

\put(500,680){$f(v)$}

\put(180,310){$v_\infty$}
\put(290,810){$v$}
\put(360,810){$s$}
\put(655,310){$v_1$}
\put(320,310){$s_\infty$}
\put(530,310){$s_1$}
\put(570,20){$f(v_1)$}
\put(135,20){$f(v_\infty)$}

\put(190,810){$I_\infty$}
\put(500,810){$I_1$}
\put(80,310){$I_\infty$}
\put(720,310){$I_1$}

\put(230,615){$M_\infty$}
\put(680,715){$M_1$}
\put(190,135){$M_\infty$}
\put(680,140){$M_1$}

\end{overpic}
\caption{\textit{In both images, for simplicity, $M_\infty=f(I_\infty)=D_\infty$ and $M(I_1)=f(I_1)=D_1$, where $I_\infty,I_1$ are denoted by the red and blue arcs (respectively) on the line $l_1$ which separates $H_-,H_+$, while the cyan directed curves denote flow lines. In the upper image, the point $v$ separates the intervals $I_1,I_\infty$, and the tangency of its trajectory to $l_1$ at $s$ causes a removable singularity (note that by definition, $f(s)=f(v)$). In the lower image on the other hand, the tangency of the trajectory of both $v_\infty$ and $v_1$ causes a jump discontinuity, i.e., $f(l_1)$ has several components which lie away from one another (in this scenario, $f(v_\infty)=f(s_\infty)$ and $f(v_1)=f(s_1)$). In the lower image, the green and purple arcs denote segments on $l_1$ adjacent to $I_\infty$ and $I_1$. For simplicity, both scenarios are sketched on the plane $H$ instead of $Q_1$.}}
\label{disc0}

\end{figure} 

Therefore from now on assume $f$ has discontinuities in $l_1$. By the continuity of $f$ around $x_1$ and $\infty$ it follows we can partition $l_1$ into a finite number of segments on which $f$ is continuous, save at their boundary points. Now, let $I_1,I_2$ denote open segments on $l_1$ s.t. there exists some $s\in l_1$ for which $I_1\cup{s}\cup I_2$ is also an open segments on $l_1$. Further assume $s$ is such that $f$ is continuous on both $I_1$ and $I_2$, and not in $s$, i.e., $s$ is a discontinuity point for $f$ in $l_1$. As discussed earlier, since $f$ is a local first-hit map discontinuity points like $s$ correspond to initial conditions whose trajectory w.r.t. the flow hits $l_1$ tangently at some $\overline{s}$ before hitting $Q_1$ transversely at $f(s)$. This implies there are precisely two possibilities regarding the nature of the discontinuity at $s$:
\begin{itemize}
    \item $f$ has a removable discontinuity at $s$. More precisely, the flow lines connecting $I_1\cup I_2\cup\{s\}$ to $f(I_1)\cup f(I_2)\cup\{\overline{s}\}$ satisfy $\lim_{r_1\to s}f(r_1)=\overline{s}=\lim_{r_2\to s}f(r_2)$, $r_2\in I_2, r_1\in I_1$ (see the illustration in Fig.\ref{disc0}). By definition, $f(s)\ne \overline{s}$.
    \item $f$ has a jump discontinuity, i.e., $\lim_{r_1\to s}f(r_1)\ne\lim_{r_2\to s}f(r_2)$ where $r_2\in I_2$, $r_1\in I_1$ (see the illustration in Fig.\ref{disc0}). This corresponds to the case where, say, all initial conditions in $I_1$ flow together with the trajectory of $s$ and hit $Q_1$ transversely (and continuously) away from $f(I_2)$. 
\end{itemize}

Now, let $M_1,M_\infty$ denote the components of ${f(l_1)}$ which connect with $x_1$ and $\infty$, respectively (see Fig.\ref{disc0}). Since $f$ is discontinuous at least in one point at the interior of $l_1$, two such components must exist. Let $I_1,I_\infty$ denote the segments of $l_1$ with $x_1,\infty$ on their boundary (respectively) s.t. $f(I_i)\subseteq M_i$, $i=1,\infty$ - it is immediate that $f(I_i)$ are mapped to arcs on $M_i$, denoted by $D_i$, where again $i=1,\infty$ (see the illustration in Fig.\ref{disc0}). By the maximality of components and the continuity of $f$ at transverse intersections, it follows $D_1$ connects $x_1$ to some point $s_1\in l_1$ through $Q_1$. Likewise, $D_\infty$ connects $\infty$ with some point $s_\infty$ through $Q_1$. Now, let $v_1,v_\infty$ denote the respective boundary points for $I_1,I_\infty$ which flow to the points $s_1,s_\infty\in l_1$ and hit them tangently before continuing to $f(v_1),f(v_\infty)$ (note that by definition, $f(v_i)=f(s_i),i=1,\infty$ - see the illustration in Fig.\ref{disc0}).\\

There are now two possibilities to consider - either $v_1=v_\infty=v$ for some $v\in l_1$, or $v_1\ne v_\infty$. We first consider the case where $v_1=v_\infty=v$, for some $v\in l_1$. By definition, it follows $s_1=s_\infty=s$ for some $s\in l_1$. Consequently, we have both $l_1=I_1\cup\{v\}\cup I_\infty$ and $D_i=M_i$, $i=1,\infty$ (see the illustration in the upper image in Fig.\ref{disc0}). We now claim $v$ must be a removable discontinuity. To see why, make a slight smooth modification in some arbitrarily small neighborhood $N$ of $s$ s.t. the flow lines emanating from some neighborhood of $v$ on $l_1$ hit $Q_1$ transversely (we can do so as $v$ is the only discontinuity point on $l_1$). In particular, we do not change the trajectories of initial conditions on $l_1$ that do not enter $N$, and we do not add any tangency points with $l_1$.\\

Since $v$ was the unique discontinuity point, removing it as we just did makes $f$ continuous throughout $l_1$. Moreover, as this deformation can be chosen to arbitrarily $C^k$--small from the original vector field $F$, $k\geq1$, it follows that the curve $f(l_1)$ is the uniform limit of curves defined by continuous homeomorphisms $f':l_1\to Q_1$. As such, it follows the only possibility is that $v$ forms a removable discontinuity, i.e., $\lim_{r_1\to v}f(r_1)=\lim_{r_\infty\to v}f(r_\infty)=s$, where $s\ne f(v)$ but $f(s)=f(v)$, as illustrated in the upper image of Fig.\ref{disc0}. Similarly to the case where $f$ is continuous on $l_1$ we conclude there exists a three dimensional body $C_1$ trapped between $V$ and $Q_1$, where $V$ denotes the flow lines connecting $l_1$ to $f(l_1)$ - again, $\partial C_1$ would be composed of regions on $Q_1$ and flow lines on $V$. As such, the proof of existence of $C_1$ in the case when $v_1=v_\infty$ is complete.\\

\begin{figure}[h]
\centering
\begin{overpic}[width=0.6\textwidth]{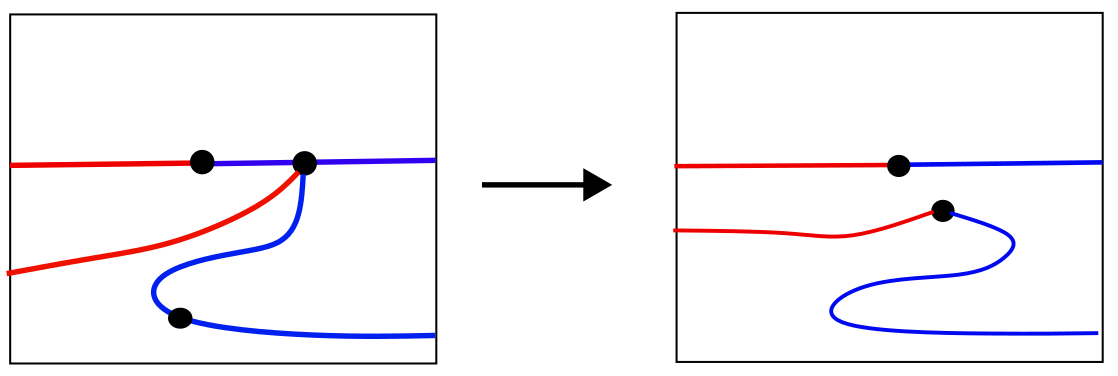}

\put(960,210){$B$}
\put(800,100){$f(v)$}
\put(40,210){$R$}

\put(170,210){$v$}
\put(790,210){$v$}
\put(260,210){$v'$}
\put(620,210){$J$}
\put(350,210){$B$}

\put(90,20){$f(v)$}
\put(800,0){$f(B)$}

\put(80,130){$f(R)$}
\put(270,135){$f(B)$}
\put(630,150){$f(R)$}

\end{overpic}
\caption{\textit{Perturbing a removable discontinuity for $f$ on the left into a continuity point on the right. In the illustration of the left, the trajectory of $v$ hits $l_1$ on tangently at $v'$ after which it flows to $f(v)$. By definition, $f(v)=f(v')$}}
\label{disc}

\end{figure} 

We now prove the existence of $C_1$ under the assumption $v_1\ne v_\infty$, i.e., we now assume there exist multiple discontinuities on $l_1$. As there are no other possibilities, this will prove that whatever the case is, under the idealized assumptions the body $C_1$ always exists, and its boundary is composed of flow lines on $V$ and regions in $Q_1$. To this end, we first claim it is enough to consider the case where all discontinuities for $f$ on $l_1$ are jump discontinuities. To justify this, note that if $v\in l_1$ is a removable discontinuity, then by an arbitrarily small smooth perturbation of $F$ we can eliminate it (see the illustration in Fig.\ref{disc}). In more detail, if the trajectory of $v$ hits $l_1$ tangently at some $v'$ and creates a removable discontinuity, we smoothly push the trajectory connecting $v,v'$ downwards s.t. the intersection becomes transverse and with $Q_1$.\\

As we can modify $F$ in this way around all removable discontinuities, it follows we can destroy all removable discontinuities on $l_1$ and make them continuous for $f$. Moreover, it is clear the modified vector field $F'$ can be constructed s.t. its $C^k$ distance from $F$ is arbitrarily small, where $k\geq1$ - i.e., can be we can $C^k$ approximate $F$ by vector fields for which the only possible discontinuities for $f$ are jump discontinuities (where $k\geq1$). It follows that if the three-dimensional body $C_1$ exists for all of these $C^k$ approximations of $F$, it must also exist for $F$ as well. In other words, to solve the case where $v_1\ne v_\infty$ it is enough to solve it only for the case when $f$ only has jump discontinuities on $l_1$ (in particular, $v_1$ and $v_\infty$ would also be jump discontinuities).\\

\begin{figure}[h]
\centering
\begin{overpic}[width=0.6\textwidth]{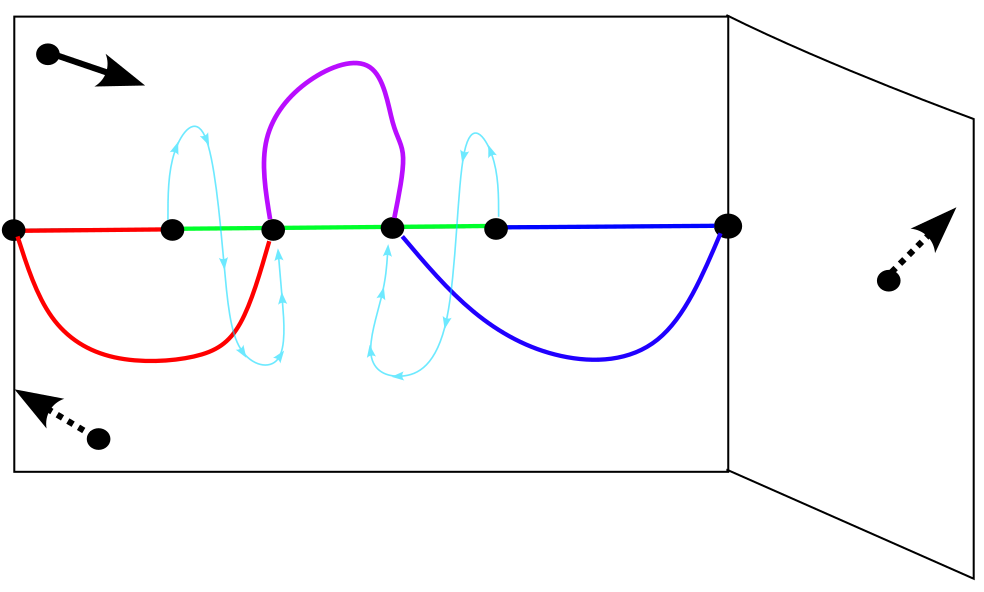}

\put(1000,210){$H_1$}
\put(-40,480){$H_+$}
\put(-40,210){$H_-$}

\put(140,180){$f(I_\infty)$}
\put(180,385){$v_\infty$}
\put(280,390){$s_\infty$}
\put(620,190){$f(I_1)$}
\put(410,390){$s_1$}
\put(480,390){$v_1$}

\put(330,550){$\mathcal{I}$}
\put(330,320){$A$}
\put(630,385){$I_1$}
\put(90,380){$I_\infty$}

\end{overpic}
\caption{\textit{The arc $A$ stretched on $l_1$ between $s_1,s_\infty$. The green arc denotes $I$ (in this illustration, $A\subseteq I$). The directions of the vector field on $H_1,H_-$ and $H_+$ are also sketched. Again, $\mathcal{I}=\cup_{s\in I}\phi_{t(s)}(s))$, and $\mathcal{I}\cup A$ is the boundary of a topological disc }}
\label{disc01}

\end{figure} 

Therefore, from now on we will assume there exist multiple discontinuities for $f$ on $l_1$, all of which are jump discontinuities. Let $I$ denote the segment of $l_1$ lying between $I_1,I_\infty$, i.e., $I=l_1\setminus(I_1\cup I_\infty)$ (see Fig.\ref{disc01}). By definition, the boundary points of $I$ in $l_1$ are $v_1$ and $v_\infty$ (see the illustration in Fig.\ref{disc01}). We consider the trajectories of both initial conditions $v_1$ and $v_\infty$. After leaving $v_1$ and $v_\infty$, these trajectories spiral inside the open quadrant $R_1$ defined by $R_1=\{(x,y,z)\mid x<c_1\}\cap\{(x,y,z)\mid \dot{x}>0\}$ until hitting $s_1,s_\infty$ tangently, after which they remain trapped within $\overline{R_1}$ until hitting the points $f(s_1),f(s_\infty)\in Q_1$ transversely (as illustrated in Fig.\ref{disc01}). Now, recall that we denote the flow function at time $t$ by $\phi_t$, and let $S$ denote a topological disc in $R_1$ whose boundary is given by the union $\partial S=(\cup_{s\in I}\phi_{t(s)}(s))\cup A$, where the components of $\partial S$ are defined as follows:
\begin{itemize}
    \item Set $\mathbb{R}^+=\{r\in\mathbb{R}|r\geq0\}$. $t(s):\overline{I}\to\mathbb{R}^+$ Denotes a continuous map s.t. when $s$ is interior to $I$, $t(s)>0$ and $\phi_{t(s)}(s)$ is interior to the flow line between $s$ and its first intersection point with $Q_1\cup l_1$. As far as its boundary behavior is concerned, we choose $t$ s.t. $\phi_{t(v_\infty)}(v_\infty)=s_\infty$ and likewise, $\phi_{t(v_1)}(v_1)=s_1$.
    \item $A$ is the sub-arc on $l_1$ with endpoints $s_1,s_\infty$ - see the illustration in Fig.\ref{disc01}.
\end{itemize}

Let us now consider the flow lines connecting $\partial S$ to $Q_1$. It is immediate that the flow lines connecting $\partial S$ to $Q_1$ must hit $Q_1$ in a way that defines a collection of Jordan domains, $T_1,...T_k\subseteq Q_1$ - where the boundaries of $T_1,...,T_k$ on $Q_1\cup l_1$ are composed of components of $f(I)$ and arcs on $l_1$, as illustrated in Fig.\ref{disc02}. \\

\begin{figure}[h]
\centering
\begin{overpic}[width=0.6\textwidth]{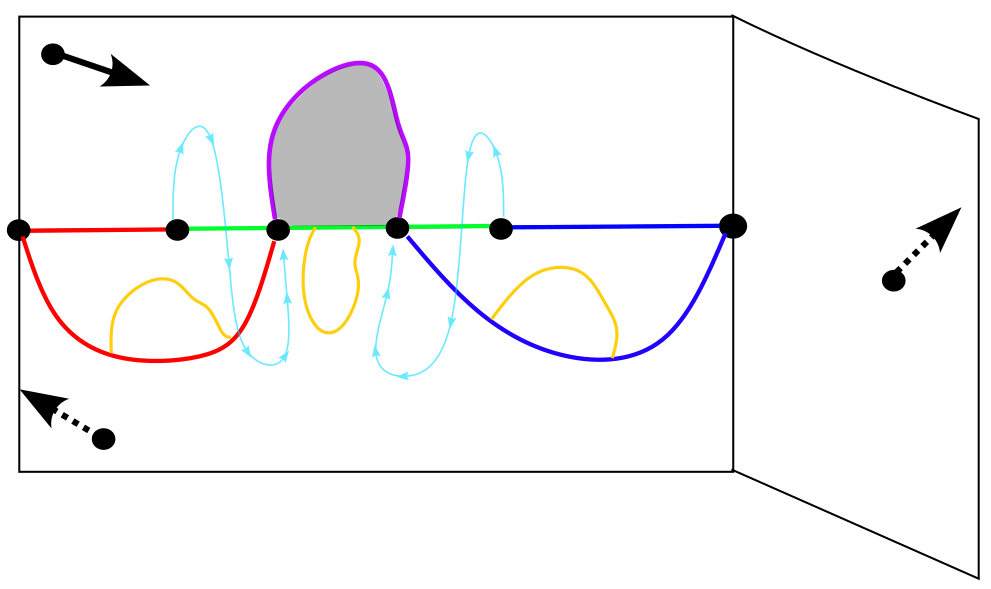}

\put(1000,210){$H_1$}
\put(-40,480){$H_+$}
\put(-40,210){$H_-$}
\put(550,270){$T_3$}
\put(140,270){$T_2$}
\put(140,190){$f(I_\infty)$}
\put(180,385){$v_\infty$}
\put(280,390){$s_\infty$}
\put(350,380){$A$}
\put(620,190){$f(I_1)$}
\put(410,390){$s_1$}
\put(480,390){$v_1$}

\put(330,550){$\mathcal{I}$}
\put(320,310){$T_1$}
\put(340,430){$S$}
\put(630,385){$I_1$}
\put(90,380){$I_\infty$}

\end{overpic}
\caption{\textit{The domains $T_1,...,T_k$ - in this scenario, $r=1$, $k=3$. The shaded area is the surface $S$ - despite its flat appearance, it should be thought of as a surface inside a three-dimensional space (in particular, $S$ is \textbf{not} a region on $H_+$). The yellow arcs on $\partial T_i$, $i=1,2,3$ correspond to the images of sub-arcs on $I$ under $f$. $\mathcal{I}$ denotes $\cup_{s\in I}\phi_{t(s)}(s))$}}
\label{disc02}

\end{figure} 

We now distinguish between two possible types of such Jordan domains. Of these Jordan domains, let $T_1,...,T_r$, $r\leq k$ denote the Jordan domains among $T_1,...,T_k$ satisfying the following (whenever they exist):
\begin{itemize}
    \item For every $1\leq j\leq r$, $\partial T_j\setminus Q_1$ is composed of arcs on the segment $A$, as illustrated in Fig.\ref{disc02}.
    \item For all $1\leq j\leq r$, every point $s_0\in \partial T_j\cap Q_1$ is connected by a flow line $\gamma$ to some $\omega_0\in \cup_{s\in I}\phi_{t(s)}(s)$ (see Fig.\ref{disc02}). Moreover, $s_0=f(r_0)$, where $\omega_0=\phi_{t(r_0)}(r_0)$.
\end{itemize}

Put simply, $T_1,...,T_r$ are the Jordan domains created when initial conditions from $\cup_{s\in I}\phi_{t(s)}(s))$ flow down and "collide" with $A\cup Q_1$ under the flow, as illustrated in Fig.\ref{disc03} and Fig.\ref{disc04}. Conversely, the other type of domains, $T_{r+1},...,T_k$ satisfies that the trajectories connecting $\cup_{s\in I}\phi_{t(s)}(s)$ to $\partial T_j$, $r+1\leq j\leq k$ do not include boundary arcs on $A$. We note that since $f$ is well-defined, any initial condition in $\cup_{s\in I}\phi_{t(s)}(s)$ eventually flows to $f(s)$, for some $s\in I$. This yields that every initial condition in $\cup_{s\in I}\phi_{t(s)}(s)$ eventually flows to some point in $\partial T_j\cap Q_1$, for some $1\leq j\leq k$.\\

For every $1\leq j\leq k$ there exists $I_j\subseteq I$, a collection of arcs s.t. $f(I_j)=\partial T_j\cap Q_1$ - by its definition, $I=\cup_{j=1}^k \overline{I_j}$. Therefore, for all $1\leq j\leq k$ we now push with the flow the initial conditions in $I_j$ towards $\partial T_j$. When $1\leq j\leq r$, those initial conditions flow until they collide with $\partial T_j\cap Q_1$, as illustrated in Fig.\ref{disc04} (in particular, the trajectories of initial conditions $\partial I_j$ hits $A$ tangently). We now note the trajectories initial conditions on $\partial I_j$, $r+1\leq j\leq k$ spin as they hit $l_1$ at $A$, as illustrated in Fig.\ref{disc03} and Fig.\ref{disc02} for $v_\infty$ and $v_1$. As these trajectories drag between them the flow lines emanating from $I_j$, $r+1\leq j\leq k$, by the orientation preserving properties of the flow we conclude the trajectories of initial conditions on $I_j$ for such $j$ slide below the flow lines emanating from $l_1$ as they flow towards $f(I_j)$, as illustrated in both Fig.\ref{disc03} and Fig.\ref{disc04} (where the flow lines emanating from $l_1$ are illustrated by the purple surface).\\

\begin{figure}[h]
\centering
\begin{overpic}[width=0.6\textwidth]{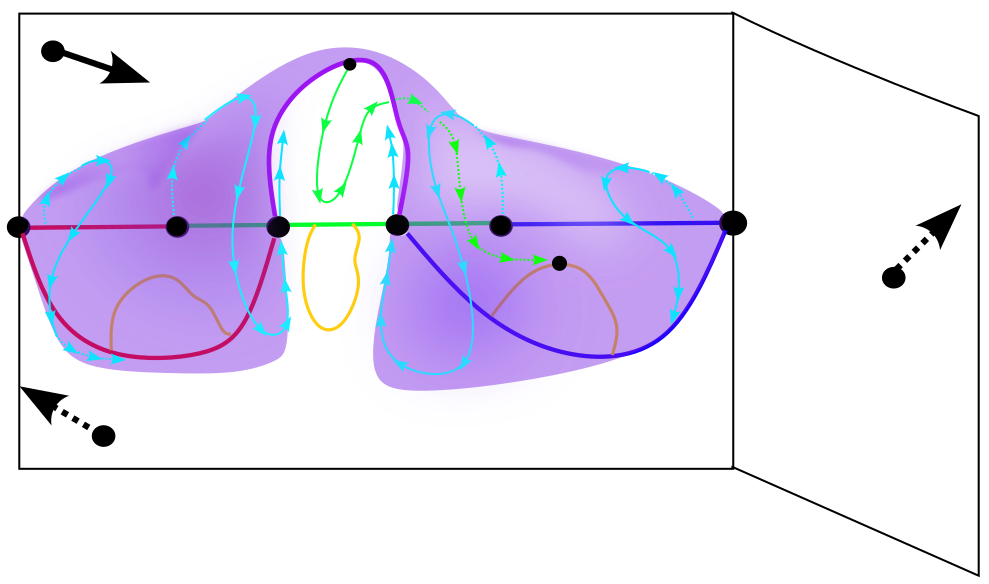}

\put(1000,210){$H_1$}
\put(-40,480){$H_+$}
\put(-40,210){$H_-$}
\put(550,270){$T_3$}
\put(140,270){$T_2$}
\put(140,190){$f(I_\infty)$}
\put(180,385){$v_\infty$}
\put(280,380){$s_\infty$}
\put(620,190){$f(I_1)$}
\put(410,380){$s_1$}
\put(480,380){$v_1$}
\put(350,380){$A$}

\put(330,560){$\mathcal{I}$}
\put(320,310){$T_1$}
\put(630,385){$I_1$}
\put(90,370){$I_\infty$}

\end{overpic}
\caption{\textit{The purple surface is composed of the flow lines connecting $I_1$ to $f(I_1)$, $I_\infty$ to $f(I_\infty)$, and $I$ to $\cup_{s\in I}\phi_{t(s)}(s))$ (the cyan flow lines, denoted by $\mathcal{I}$). Initial conditions on $\mathcal{I}$ that do not tend to $\partial T_1,...\partial T_r$ slide below the purple surface as they tend to $\partial T_{r+1},...,\partial T_k$ (as exemplified by the green flow line beginning in $\mathcal{I}$). In this illustration, $k=3$, $r=1$.}}
\label{disc03}

\end{figure} 

It follows that since $I=\cup_{j=1}^k \overline{I_j}$, for each $r+1\leq j\leq k$ we can define a continuous $t_j:I_j\to\mathbb{R}^+$ be a continuous function defined as described below:
\begin{itemize}
    \item For $s\in I_j$, choose $t_j:\overline{I_j}\to\mathbb{R}^+$ s.t. for all interior $s\in I_j$, $t_j(s)>0$ and $\phi_{t_j(s)}(s)$ is strictly interior to the flow line connecting $s$ to the first intersection point of its trajectory with $Q_1\cup l_1$.
    \item On points $s\in\partial I_j$, if the first intersection point of the trajectory of $s$ with $I$ are at some $s_0\in A$, set $t_j(s)$ s.t. $\phi_{t_j(s)}(s)=(s_0)$. Otherwise, choose $t_j$ s.t. $\phi_{t_j(s)}(s)$ lies strictly between $s$ and the first intersection point of its trajectory with $Q_1\cup l_1$.
\end{itemize}

In other words, we "push down" to $Q_1$ with the flow only the initial conditions on $\cup_{s\in I}\phi_{t(s)}(s)$ that eventually hit the sub-domain on $Q_1$ trapped between $f(I_1),f(I_\infty)$ and $A$ (see the illustration in Fig.\ref{disc04}). Now, write $\partial S_j=(\cup_{s\in \overline{I}_j}\phi_{t_j(s)}(s))\cup A_j$ for some appropriate arc $A_j\subseteq A$ s.t. $\overline{A_j}\cap(\cup_{s\in \overline{I}_j}\phi_{t_j(s)}(s))\ne\emptyset$, where $r+1\leq j\leq k$. By definition, $\partial S_j$ is the boundary of some topological disc $S_j$ in $R_1$, and every initial condition on $\partial S_j$ eventually hits $Q_1$ transversely and escapes the quadrant $R_1$. We now let $\mathcal{T}_j$, $r+1\leq j\leq k$ denote the three dimensional body trapped between $Q_1$, $S_j$, and the flow lines connecting $\partial S_j$ and $Q_1$. It is easy to see $\mathcal{T}_j$ can be visualized as a three dimensional tube, with one opening at $S_j$ where (some) trajectories can (possibly) enter,the tube $\mathcal{T}_j$. Conversely, the regions in $\partial\mathcal{T}_j\cap Q_1$, denoted by $\mu_j$, correspond to where trajectories escape $\mathcal{T}_j$ under the flow.\\

\begin{figure}[h]
\centering
\begin{overpic}[width=0.65\textwidth]{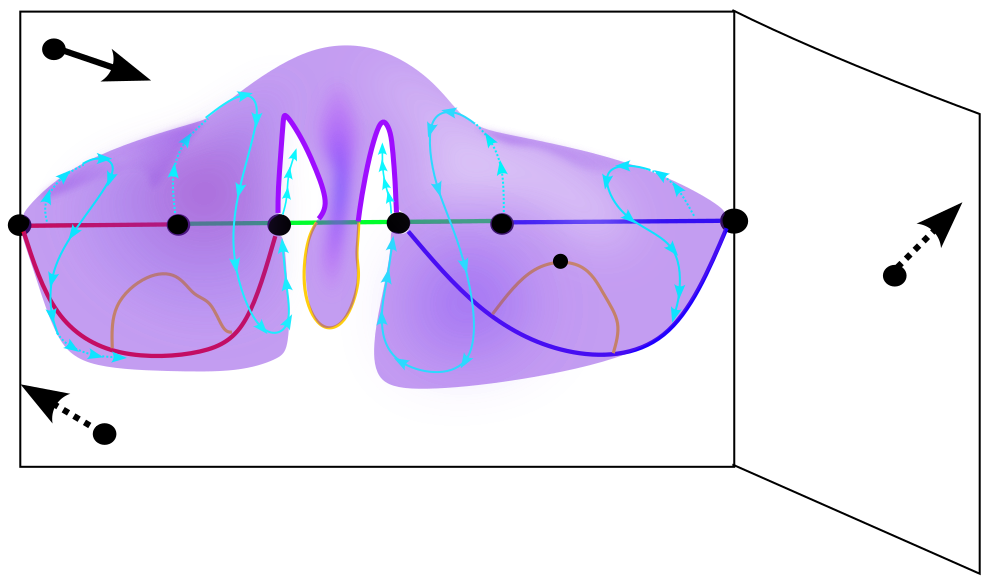}

\put(1000,210){$H_1$}
\put(-40,480){$H_+$}
\put(-40,210){$H_-$}
\put(550,270){$T_3$}
\put(140,270){$T_2$}
\put(140,190){$f(I_\infty)$}
\put(180,385){$v_\infty$}
\put(250,380){$s_\infty$}
\put(620,190){$f(I_1)$}
\put(410,380){$s_1$}
\put(480,380){$v_1$}
\put(360,340){$A_3$}
\put(290,340){$A_2$}
\put(360,490){$\mathcal{I}_3$}
\put(280,500){$\mathcal{I}_2$}
\put(320,310){$T_1$}
\put(630,385){$I_1$}
\put(90,370){$I_\infty$}

\end{overpic}
\caption{\textit{The purple surface is composed of the flow lines connecting $I_1$ to $f(I_1)$, $I_\infty$ to $f(I_\infty)$, $I_j$, $1\leq j\leq r$ to $\partial T_j\cap Q_1$, and $I_j$, $r+1\leq j\leq k$ to $\cup_{s\in \overline{I}_j}\phi_{t_j(s)}(s)$ (in this scenario, $r=1$, $k=3$). Initial conditions on $\mathcal{I}$ that do not tend to $\partial T_1,...\partial T_r$ slide below the purple surface as they tend to $\partial T_{r+1},...,\partial T_k$ (as exemplified by the green flow line beginning in $\mathcal{I}$). In this illustration, $k=3$, $r=1$. $S_j$, $j=2,3$ is the topological disc in $R_1$ trapped between $\mathcal{I}_j$ and the arc $A_j$.}}
\label{disc04}

\end{figure} 

Now, consider $C=R_1\setminus V$, where $R_1$ is as defined before, i.e., the quadrant $\{\dot{x}>0\}\cap\{x<c_1\}$. By definition, for all $r+1\leq j\leq k$, $\mathcal{T}_j$ lies in the same component $\Phi$ of $C$, as the boundary of any $\mathcal{T}_j$ is composed of the openings on $Q_1,S$ and flow lines which emanate from $l_1$ (see the illustration in Fig.\ref{disc03} and Fig.\ref{disc02}). That being said, there exists a component $C_1$ with $x_1$ on its boundary, defined by the flow lines connecting $I_1$ to $f(I_1)$ locally around $x_1$, as illustrated in Fig.\ref{disc03}. The component $C_1$ clearly cannot include a tube $\mathcal{T}_j$, $r<j\leq k$ in it, as by their construction, the interior points in $\mathcal{T}_j$ must be separated from $x_1$ by flow lines on $V$ connecting $S_j$ to the regions $\mu_j$. As such, we conclude $C_1$ does not intersect any of the tubes $\mathcal{T}_j$ - hence it is also disjoint from $\Phi_j$, $r<j\leq k$. As such, this component $C_1$ is trapped between three two dimensional sets:
\begin{itemize}
    \item The flow lines connecting $I_1,I_\infty$ and $\cup_{j=1}^rI_j$ to $f(I_1),f(I_\infty)$ and $\cup_{j=1}^r f(I_j)$.
    \item  The flow lines on $\partial \Phi\cap V$.
    \item The two-dimensional set $Q_1$.
\end{itemize}

Moreover, due to the idealized assumptions on the continuity of $f$ around $x_1$, we conclude locally around $x_1$, $C_1$ has the shape of a topological cone with a tip at $x_1$. All in all, we have proven $C_1$ is a component disjoint from the tubes, and as such, $\partial C_1$ is composed of either regions on $Q_1$ or flow lines on $V$. This concludes the proof of existence of $C_1$ when $v_1\ne v_\infty$.\\

Summarizing our results so far, we have proven that regardless of whether $f$ is continuous or not on $l_1$, $V$ and $Q_1$ trap between them a three-dimensional body whose boundary lies on $V\cup Q_1$. Moreover, we have shown that in all cases, locally around the point $x_1$, $C_1$ has the shape of a topological cone with a tip at $x_1$ (see Fig.\ref{linear}). By this construction of $C_1$ (and by $Q_1=H_-\cup (H_1\cap\{F_1(s)>0\})$), we conclude that given any $s\in\partial C_1$ the point $s$ lies in precisely one of the following three places:

\begin{enumerate}
    \item $V$ - in which case, as $V$ is made of flow lines, the vector $F(s)$ is tangent to $V$ (and hence to $\partial C_1$).
    \item $H_-$ - in which case $F(s)$ points into the region $\{\dot{x}(s)<0\}$.
    \item $H_1\cap\{\dot{x}(s)>0\}$ - in which case $F(s)$ points into the region $\{(x,y,z)\mid x>c_1\}$.
\end{enumerate}

Since by construction $C_1\subseteq\{\dot{x}(s)\geq0\}\cup\{(x,y,z)\mid x\leq c_1\}$ it follows that throughout $\partial C_1$ the vector field $F$ is either tangent to $\partial C_1$ or points outside of it - in particular, no initial condition can enter $C_1$ under the flow. Now, recall that per the idealized assumptions on $F$ the Jacobian matrix at $x_1$ has no purely imaginary eigenvalues, and denote the said Jacobian by $J_1$. From the Hartman-Grobman Theorem it follows that under the idealized assumptions the local dynamics of $\dot{s}=F(s)$ around $x_1$ are orbitally equivalent to those generated by $\dot{s}=J_1s$ around the origin. In other words, there exists two open balls, $B_r(x_1)$ and $B_1(0)$ and a homeomorphism $h:B_r(x_1)\to B_1(0)$ which takes the flow lines of $F$ in $B_r(x_1)$ to those of $J_1$ in $B_1(0)$. As such, by the discussion above we conclude $h(C_1\cap B_r(x_1))=K_1$ is also a cone with a tip at $0$ into which nothing can enter (see Fig.\ref{linear}). This immediately implies $K_1$ must include some eigenvector for $J_1$. As a consequence,  $C_1$ intersects some invariant manifold $\Gamma_1$ for $x_1$.\\ 

We now claim $\Gamma_1$ is unbounded. To see why, consider some $s\in\Gamma_1$ and recall that for every $s\in\overline{C_1}$, the $x$ coordinate of $s$ is at most $x_1$. Now, further note that by construction no trajectory can enter $C_1$ under the flow generated by $F$, which implies no trajectory can escape $\overline{C_1}$ under the inverse flow, generated by $-F$. Recalling we denote the flow function by $\phi_t,t\in\mathbb{R}$, this shows that for every $s\in\Gamma_1$ and every $t<0$ we have $\phi_t(s)\in\overline{C_1}$. Moreover, by $C_1\subseteq\{\dot{x}(s)\geq0\}\cap\{(x,y,z)\mid x\leq c_1\}$ it follows the $x$--velocity on the backwards trajectory of $s$ is always non-negative - and since $x_1$ is the only fixed point in $\overline{C_1}$ this implies that under the inverse flow the $x$ coordinate of the point $\phi_t(s)$ must tend to $-\infty$ as $t\to-\infty$. Or, in other words, we have just proven $\Gamma_1$ is trapped in $\overline{C_1}$ and connects $x_1$ to $\infty$. This concludes the proof of existence of $\Gamma_1$ under the idealized assumptions.\\

\begin{figure}[h]
\centering
\begin{overpic}[width=0.35\textwidth]{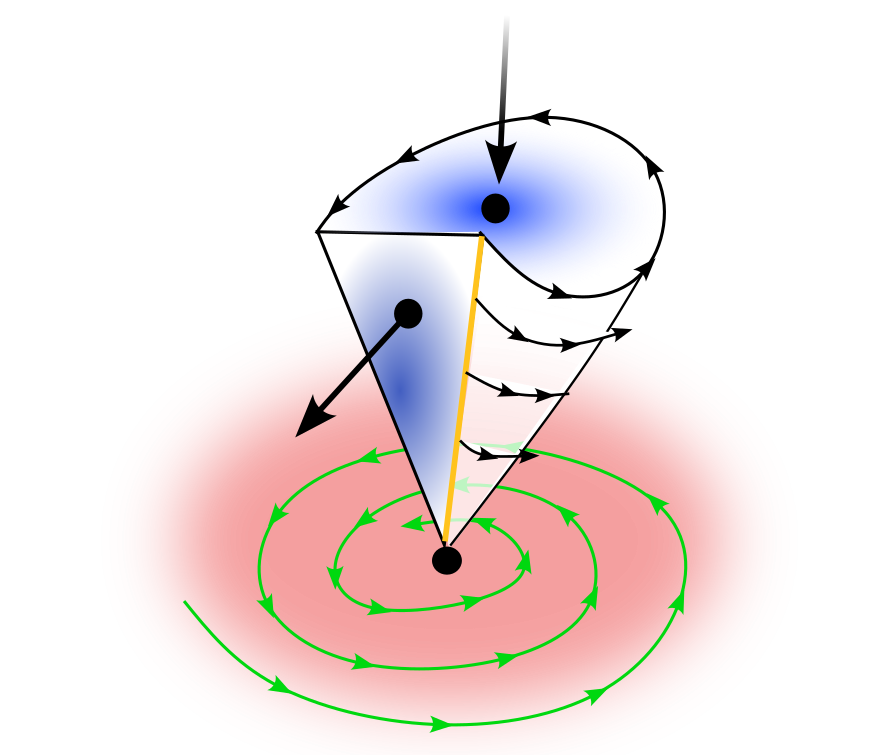}
\end{overpic}
\caption{\textit{The topological cone with a tip at $x_1$.}}\label{linear}

\end{figure}
\subsection{Stage III - removing the idealized conditions.}
Having proven the existence of $\Gamma_1$ under idealized assumptions, we now do the same in the general case - i.e., we now show these assumptions can be removed by resorting to a method of approximation. To do so, assume the vector field $F$ does not satisfy the idealized assumptions. We first note the function $f$ is still defined, even if discontinuous around $\infty$ and $x_1$ (this follows since any initial condition on $l_1$ has to hit either $H_1$ or $H_-$ in finite time). We now smoothly deform it as follows:

\begin{itemize}
    \item We smoothly deform the flow generated by $F$ in some small neighborhood of $x_1$ (if necessary) by smoothly deforming the Jacobian matrix of $x_1$, $J_1$, such that it has no imaginary eigenvalues, and $f$ is continuous around $x_1$. Note that this does not change the Poincar\'e Index of $x_1$.
    \item We use the assumption that the Poincaré index of $F$ at $\infty$ is $d$ (where $d\in\{0,\pm1\}$). We first consider only the cases where $d=0,1$. To begin, sing Hopf's Theorem we homotopically deform the dynamics of $F$ on some arbitrarily small neighborhood $N$ of $\infty$ such that $\infty$ becomes a smooth fixed point of Poincaré index $-d$. This allows us to deform $F$ in some arbitrarily small neighborhood of $\infty$ to ensure the following:

    \begin{enumerate}
        \item When $d=0$, we deform the behavior around $\infty$ s.t. the flow behaves as in Fig.\ref{ind}. In particular, we do so s.t. the trajectories of initial conditions on $l_1$ sufficiently close to $\infty$ hit $Q_1$ transversely as shown in the lower image in Fig.\ref{ind1} - i.e., we ensure both the continuity of $f$ around $\infty$, and $\lim_{s\to\infty}f(s)=\infty$ (w.r.t. the deformation).
        \item When $d=1$, we make $\infty$ a saddle focus of Poincar\'e Index $-1$. Again, this allows us to deform the flow around $\infty$ s.t. the trajectories of initial conditions hit $Q_1$ transversely, thus again ensuring the continuity of $f$ around $\infty$, and $\lim_{s\to\infty}f(s)=\infty$ (w.r.t. the deformation). See the illustration in the upper left image in Fig.\ref{ind1}.
        \item When $d=-1$ we make $\infty$ a source with complex-conjugate eigenvalues (it will trivially be of Poincar\'e Index $1$). Again, this allows us to ensure that w.r.t. this deformation $f$ is continuous on $l_1$ around $\infty$, and satisfies $\lim_{s\to\infty}f(s)=\infty$ (see the illustration in the upper right image in Fig.\ref{ind1}).
    \end{enumerate}
\end{itemize}

\begin{figure}[h]
\centering
\begin{overpic}[width=0.5\textwidth]{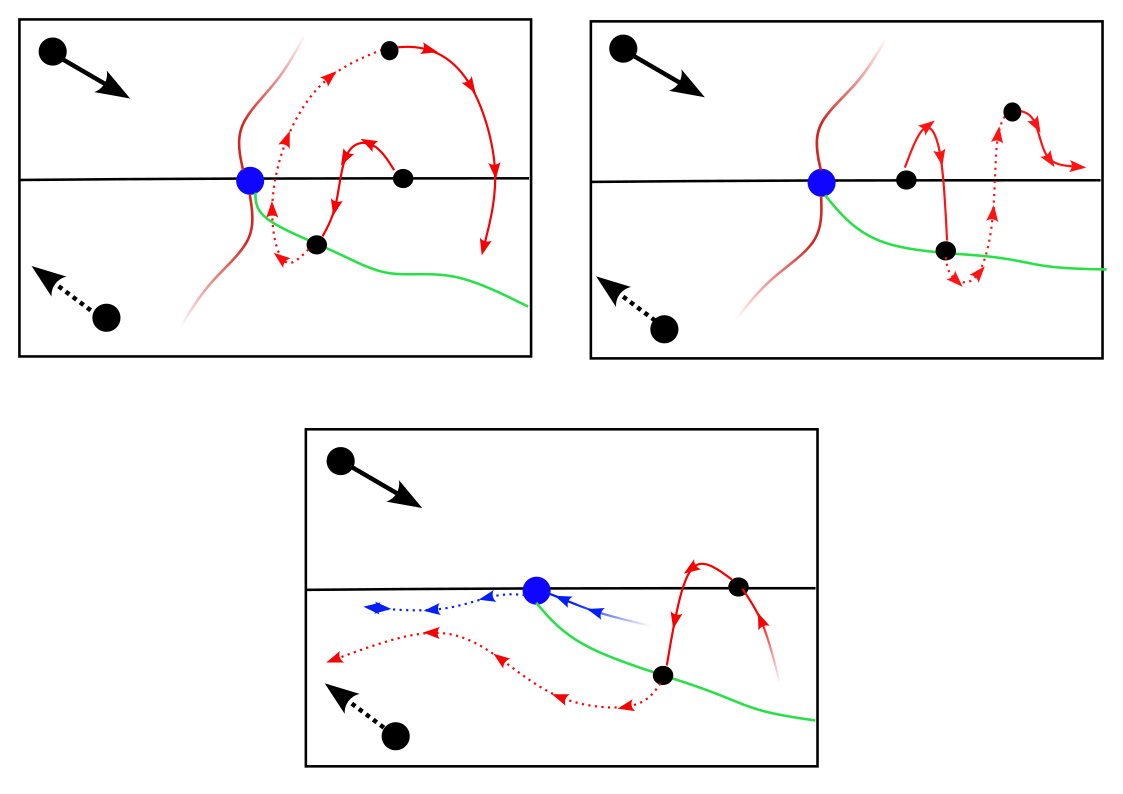}
\put(100,470){$H_-$}
\put(570,470){$H_-$}
\put(390,50){$H_-$}

\end{overpic}
\caption{\textit{The local dynamics around $\infty$ after the deformation, when $d=1,-1,0$. In all figures the blue dot denotes $\infty$, while the square denotes $H$. The green curve always denotes the image of $l_1$ under $f$, while the red flow line denotes the trajectory of an initial condition on $l_1$. The dark arrows denote the directions of the vector field on the two half planes. On the upper left image we illustrate the scenario around $\infty$ when $d=1$ - in this scenario $\infty$ (the blue dot) is a saddle focus with a two-dimensional stable manifold (the brown line), causing the trajectories of initial conditions on $l_1$ to spiral around it. On the upper right figure we have the scenario where $d=-1$, where $\infty$ is a source with a two-dimensional invariant manifold corresponding to two complex conjugate eigenvalues, around which the trajectories of initial conditions in $l_1$ also spiral. The lower image denotes the scenario where $d=0$, in which $\infty$ is a fixed point of Poincar\'e Index $0$ with invariant manifolds flowing towards and away from it.}}\label{ind1}

\end{figure}
At this point we remark the deformation around $\infty$ depends on the Poincar\'e Index at $\infty$ alone. In addition, let us also recall that since the deformation around $x_1$ cannot change the Poincar\'e Index, since $x_1$ is non-degenerate and isolated. As such, using the exact same argument previously applied to $\infty$ and now applied to $x_1$, we deform the local dynamics around $x_1$ s.t. $f$ is continuous there, and satisfies $\lim_{s\to x_1}f(s)=x_1$. Combined with the deformation above that already ensures $\lim_{s\to\infty}f(s)=\infty$ and the continuity of $f$ around $\infty$, we have proven $F$ can be deformed away from $x_1,\infty$ s.t. the new vector field satisfies the assumptions of Stage $II$. Moreover, that new vector field can be chosen to coincide with the original one, $F$, on some arbitrarily large set in $S^3$.\\

Let us denote this new vector field by $F'$. As mentioned above, we choose$F'$ such that it coincides with $F$, the original vector field, on an arbitrarily large set of $\mathbb{R}^3$. It is therefore immediate $F'$ satisfies the assumptions of the idealized scenario studied in Stage $II$ - which implies that with respect to $F'$ the fixed point $x_1$ generates a one-dimensional invariant manifold $\Gamma'_1$ which satisfies the following two assertions:

\begin{enumerate}
    \item $\Gamma'_1\subseteq\{\dot{x}\geq0\}\cap\{(x,y,z)\mid x<c_1\}$.
    \item  $\Gamma'_1$ connects $x_1$ and $\infty$. 
\end{enumerate}

where the velocities above are taken with respect to the new vector field $F'$. Now, let $D$ denote the subset of $S^3$ on which $F'$ and $F$ differs. As remarked above, we can choose $D$ to be arbitrarily small (in $S^3$) - which implies that since for every $F'$ the invariant manifold $\Gamma'_1$ as above exists and satisfies the above properties, there must also exist $\Gamma_1$, an invariant manifold for $x_1$ connecting $x_1$ and $\infty$, which is trapped in $\{\dot{x}\geq0\}\cap\{(x,y,z)\mid x<c_1\}$ (this follows as trivially, $F'\to F$ in the $C^k$ metric in $\mathbb{R}^3$ away from some neighborhoods of $x_1,\infty$). This concludes the proof of Case $A$.\\

We now sketch the proof of Case $B$, which, as remarked earlier, is very similar to Case $A$ - recalling the definition of $t(s)$ in \pageref{time}, we recall that in Case $B$ the scenario is that for all $s\in l_1$ the curve $\phi_{(-t(s),t(s))}(s)$ is in $\{F_1(s)\leq0\}$. In particular, in Case $B$ the backwards trajectory of every $s\in l_1$ either hits transversely $H_-$ and enters $\{\dot{x}(s)>0\}$ in backwards time, or it hits transversely $H_1\cap \{\dot{x}(s)<0\}$ and enters $\{x>c_1\}\cap \{\dot{x}(s)<0\}$ in backwards time. This implies that analogously to $p(s)$ in Case $A$, in Case $B$ we can define $q(s)$ for any given $s\in l_1$ such that $q(s)$ is the first backwards time for which $\phi_{q(s)}(s)\in H_+\cup(H_+\cap\{\dot{x}(s)<0\})$. By replacing the vector field $F$ with $-F$ similar arguments to those above imply the same conclusion follows for Case $B$ as well - i.e., similar arguments now imply that Case $B$ also forces the existence of some one-dimensional invariant manifold $\Gamma_1$ on $x_1$ such that $\Gamma_1\subseteq\{\dot{x}\leq0\}\cap\{(x,y,z)\mid x<c_1\}$.\\

\begin{figure}[h]
\centering
\begin{overpic}[width=0.5\textwidth]{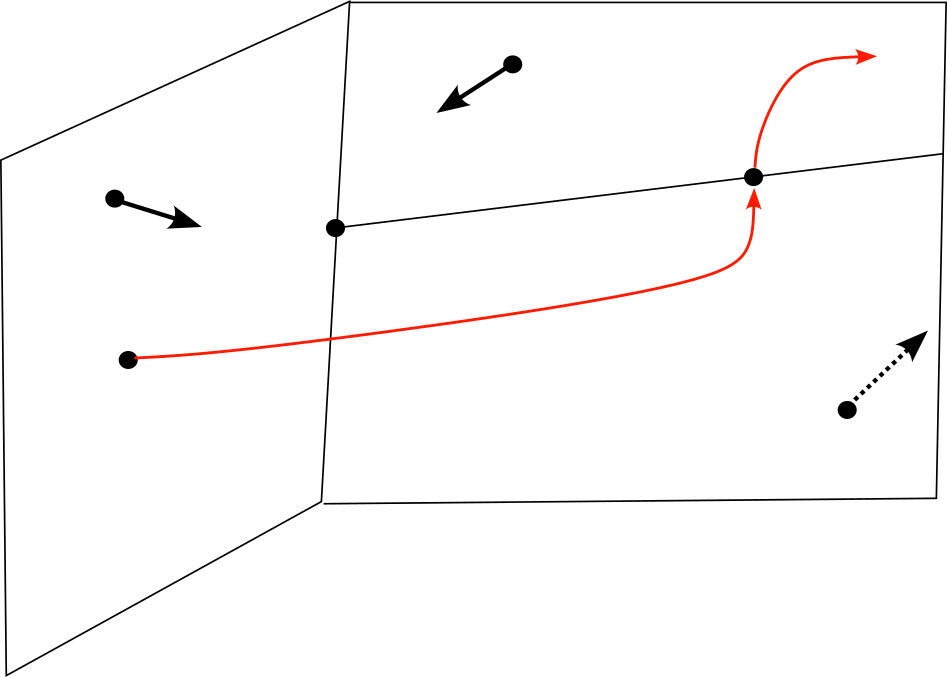}
\put(100,360){$s$}
\put(600,590){$H_+$}
\put(700,325){$H_-$}
\put(180,150){$H_2$}
\put(290,450){$x_2$}
\put(500,440){$l_2$}
\end{overpic}
\caption{\textit{The geometric configuration for $x_2$ with the directions of the vector field on $H_2,H_+$ and $H_-$ - as every initial condition on $l_2$ flows in backwards time towards either $H_2$ or $H_+$, similar arguments to those used before imply the existence of an unbounded invariant manifold for $x_2$.}}\label{comp4}

\end{figure}

\subsection{Stage IV - concluding the proof.}

Having proven the existence of $x_1$ and $\Gamma_1$ in both Cases $A$ and $B$, we now sketch the proof for the existence of the fixed point $x_2$ and the corresponding invariant manifold $\Gamma_2$. To begin, recall we defined $x_2$ as the fixed point which maximizes the $x$--coordinate on $l$ - note $x_2$ may or may not be distinct from $x_1$, as illustrated in Fig.\ref{comp} (similar arguments to those used to prove the existence of $x_1$ also imply its existence). Similarly to the previous case, it would suffice to prove the existence of $\Gamma_2$ under the non-genericity assumption for $F$ - i.e., it would suffice to prove it under the assumption there is no sub-arc $\delta\subseteq l$, $\delta\ne\Gamma_1$, connecting some fixed point $w'$ to $\infty$ - for again, if there exists such a curve we just set $\Gamma_2=\delta$ and $x_2=w'$.\\

To begin, set $x_2=(a_1,a_2,a_3)$, and consider the plane $H_2=\{(a_1,y,z)\mid y,z\in\mathbb{R}\}$ - which will play an analogous role to that of $H_1$. Per our assumptions on $F$, we know $H_2$ intersects the cross-section $H$ as appears in Fig.\ref{comp4} - and analogously to $l_1$, we define $l_2$ to be the sub-arc of $l$ connecting $x_2$ and $\infty$ through $\{(x,y,z)\mid x>a_1\}$. By studying the trajectories of initial conditions $s\in l_2$ similar arguments to those used above imply the existence of an invariant manifold $\Gamma_2$ for $x_2$, such that the following holds:

\begin{enumerate}
    \item $\Gamma_2$ is a curve connecting $\infty$ and $x_2$.
    \item $\Gamma_2\subseteq\{(x,y,z)\mid x>a_1\}$.
    \item The $\dot{x}$ velocity on $\Gamma_2$ never changes its sign.
\end{enumerate}

Having proven the existence of both $\Gamma_1$ and $\Gamma_2$ we are now in a position to conclude the proof of Theorem \ref{infinitytheorem}. To do so, all that remains is to show the invariant manifolds $\Gamma_1$ and $\Gamma_2$ are not knotted with themselves - in the sense that there exists a curve $\gamma$ connecting $x_1$ and $x_2$ such that the union $\{x_1,x_2,\infty\}\cup\Gamma_1\cup\Gamma_2\cup\gamma$ is the unknot. To do so, first note that by construction we have $\Gamma_1\subseteq\{(x,y,z)\mid x<c_1\}$ and $\Gamma_2\subseteq\{(x,y,z)\mid x>a_1\}$ - by $a_1\geq c_1$ it follows the curves $T_1=\Gamma_1\cup\overline{l_1}$ and $T_2=\Gamma_2\cup\overline{l_2}$ cannot be linked with one another (where the closure $\overline{l_i}, i=1,2$ is taken in $S^3$). Similarly, as the $\dot{x}$ velocity on either $\Gamma_1$ or $\Gamma_2$ never changes its sign, it follows that whenever either one of the curves $T_1$ and $T_2$ are knots, their knot-type can only be the $S^1$, i.e., both are unknots. As such, setting $\gamma$ as the straight line connecting $x_1$ and $x_2$ we see $\{\infty,x_1,x_2\}\cup\gamma\cup\Gamma_1\cup\Gamma_2$ can only be the unknot The proof of Theorem \ref{infinitytheorem} is now complete.
\end{proof}
\begin{remark}
    The assumptions of Theorem \ref{infinitytheorem} can be modified to derive similar results for vector fields that do not fit precisely into the assumptions of Theorem \ref{infinitytheorem}. We given an example of such a case in Section \ref{nonstand}.
\end{remark}
\begin{remark}
\label{nocon}    It is easy to see from the proof of Theorem \ref{infinitytheorem} that $x_1=x_2$ precisely when $F$ generates one fixed point. Or in other words, whenever $F$ has more than one fixed point in $\mathbb{R}^3$, we have $x_1\ne x_2$.
\end{remark}

\section{The applications:}
Having proven Theorem \ref{infinitytheorem}, in this section we show how to apply it. In more detail, in this section we apply Theorem \ref{infinitytheorem} to study three examples of three-dimensional flows - the Belousov--Zhabotinsky reaction, the Genesio--Tesi system, and the Michelson system (see \cite{BZ}, \cite{GT} and \cite{Michh}, respectively).\\

This section is organized as follows - in Section \ref{ap1} we apply Theorem \ref{infinitytheorem} directly to prove the existence of unbounded invariant manifolds for both the Belousov--Zhabotinsky reaction and the Genesio--Tesi system. Following that, in Section \ref{ap2} we apply Theorem \ref{infinitytheorem} to study the topology generated by the Michelson system - in particular, by combining Theorem \ref{infinitytheorem} and the results of \cite{DW} we prove the Michelson system generates infinitely many homoclinic nooses and heteroclinic knots (see Definition \ref{knotnoose}). In particular, our results on the Michelson system in Section \ref{ap2} exemplify the potential uses of Theorem \ref{infinitytheorem} to the study of forcing phenomena for three-dimensional flows.\\

Before we begin, we remark certain ideas from the proof of Theorem \ref{infinitytheorem} can also be applied to study systems which do not necessarily satisfy all the assumptions of Theorem \ref{infinitytheorem} - or in other words, Theorem \ref{infinitytheorem} can probably be generalized, possibly in more than one way. For completeness' sake, we defer the discussion in such possible generalizations of Theorem \ref{infinitytheorem} to Section \ref{nonstand}, where we show how this can be done via a concrete example. 

\subsection{Unbounded dynamics in the Belousov--Zhabotinsky reaction and the Genesio--Tesi system.}
\label{ap1}
In this section we apply Theorem \ref{infinitytheorem} to study two classical examples of three-dimensional flows - the Belousov--Zhabotinsky and the Genesio--Tesi system. We begin with the Belousov--Zhabotinsky equation - to do so, given three positive parameters $a,b,c\in\mathbb{R}$ we define the Belousov--Zhabotinsky reaction as follows (see \cite{BZ}):

\begin{equation} \label{BZ}
\begin{cases}
\dot{x} = y \\
 \dot{y} = z\\
 \dot{z}=x-4 y-z+x^2-ay^2-bxz-cx^2z
\end{cases}
\end{equation}

As observed numerically, there exists parameter values for which the flow generates a chaotic invariant set - see \cite{BZ} for the details. It is easy to see the flow generated by the equations above has precisely two fixed points - $(0,0,0)$, a saddle focus of Poincaré index $1$, and $(-1,0,0)$, a sink with two complex-conjugate eigenvalues of Poincaré index $-1$. Moreover, by computation the divergence of this system is negative precisely in the region $\{(x,y,z)\mid -1-bx-cx^2<0\}$, and positive otherwise - or in other words, heuristically one should not expect the flow to have a global attractor.\\

Now, consider the plane $H=\{\dot{x}=0\}=\{(x,0,z)\mid x,z\in\mathbb{R}\}$ - as the normal vector to $H$ is $(0,1,0)$ by direct computation we see the vector field is tangent to $H$ precisely on the curve $l=\{(x,0,0)\mid x\in\mathbb{R}\}$. Similarly, we also have the equalities $\{\dot{x}>0\}=\{(x,y,z)\mid y>0\}$ and $\{\dot{x}<0\}=\{(x,y,z)\mid y<0\}$ (see Fig.\ref{Bpic}) - and finally, it is easy to see $H$ is transverse to any plane parameterized by $\{(c,y,z)\mid y,z\in\mathbb{R}\}, c\in\mathbb{R}$. We now evaluate the behavior of the flow on the curve $l=\{(x,0,0)\mid x\in\mathbb{R}\}$ - noting that on any point $s=(x,0,0)$ the vector field points in the direction $(0,0,x(x+1))$ we conclude the following:

\begin{itemize}
    \item For all $x\in(-1,0)$, the vector field points at $s$ in the negative $z$--direction (see Fig.\ref{Bpic}).
    \item For $x\ne[-1,0]$ the vector field at $s$ points in the positive $z$--direction (see Fig.\ref{Bpic}).
\end{itemize}

To continue, consider the set $H_+=\{(x,0,z)\mid z>0\}$ and $H_-=\{(x,0,z)\mid z<0\}$. By direct computation one sees that on $H_+$ the flow trajectories cross from $\{\dot{x}\leq0\}$ into $\{\dot{x}>0\}$ - and similar arguments prove that on $H_-$ the flow trajectories cross from $\{\dot{x}\geq0\}$ into $\{\dot{x}<0\}$ (see Fig.\ref{Bpic}). Combining this information with the behavior of the vector field on the curve $l$ (as discussed above), we immediately conclude:

\begin{itemize}
    \item  For $x\in(-1,0)$ the trajectory of $s\in l$, $s=(x,0,0)$ arrives at $s$ from $\{\dot{x}<0\}$ and re-enters $\{\dot{x}<0\}$ immediately upon leaving $s$ (see Fig.\ref{Bpic}).
    \item Similarly, for $x\not\in[-1,0]$ the trajectory of $s\in l$, $s=(x,0,0)$ arrives at $s$ from $\{\dot{x}>0\}$, hits $s$, and immediately re-enters $\{\dot{x}>0\}$ upon leaving $s$ (see Fig.\ref{Bpic}).
\end{itemize}

Or put simply, we have just shown the sets $l$, $H$, $H_\pm$ and the planes $H_c=\{(c,y,z)\mid y,z\in\mathbb{R}\}$ satisfy the assumptions of Theorem \ref{infinitytheorem}. As we have already shown the fixed points $(0,0,0)$ and $(1,0,0)$ each have a pair of complex-conjugate eigenvalues - as the first is a saddle focus and the second a complex sink - it follows that to prove the Belousov--Zhabotinsky reaction satisfies the assumptions of Theorem \ref{infinitytheorem} it remains to compute the Poincaré index at $\infty$.\\

To do so, for simplicity, denote by $F$ the vector field generating the Belousov--Zhabotinsky reaction as defined above - we now prove the Poincaré index of $F$ at $\infty$ is $0$. To do so, we first note that by direct computation one obtains the equality $l=\{\dot{y}=0\}\cap\{\dot{x}=0\}=\{(x,0,0)\mid x\in\mathbb{R}\}$ - which implies $F$ can point in the $(0,0,-1)$ direction only on $s\in l$. Now, choose some large sphere $S_r=\{\mid \mid s\mid \mid =r\}$ and note $l\cap S_r$ consists of precisely two points - $(-r,0,0),(r,0,0)$ and that $F(\pm r,0,0)=(0,0,\pm r(\pm r+1))$. When $\mid r\mid $ is sufficiently large, $F(\pm r,0,0)$ points in the positive $z$--direction - i.e., the map $\frac{F}{\mid \mid F\mid \mid }:S_r\to S^2$ does not include the direction $(0,0,-1)$ in its image. This immediately implies the degree of $\frac{F}{\mid \mid F\mid \mid }$ on $S_r$ is $0$ - and since $r>1$ was chosen arbitrarily, it follows the Poincaré index at $\infty$ is also $0$. Therefore, all in all, it follows the Belousov--Zhabotinsky reaction satisfies all the assumptions of Theorem \ref{infinitytheorem}. Denoting $x_1=(0,0,0)$ and $x_2=(-1,0,0)$ we conclude by Remark \ref{nocon} that each one of these fixed points generates an unbounded invariant manifold, $\Gamma_1$ and $\Gamma_2$, connecting it to $\infty$ (see Fig.\ref{Bpic}).\\
\begin{figure}[h]
\centering
\begin{overpic}[width=0.5\textwidth]{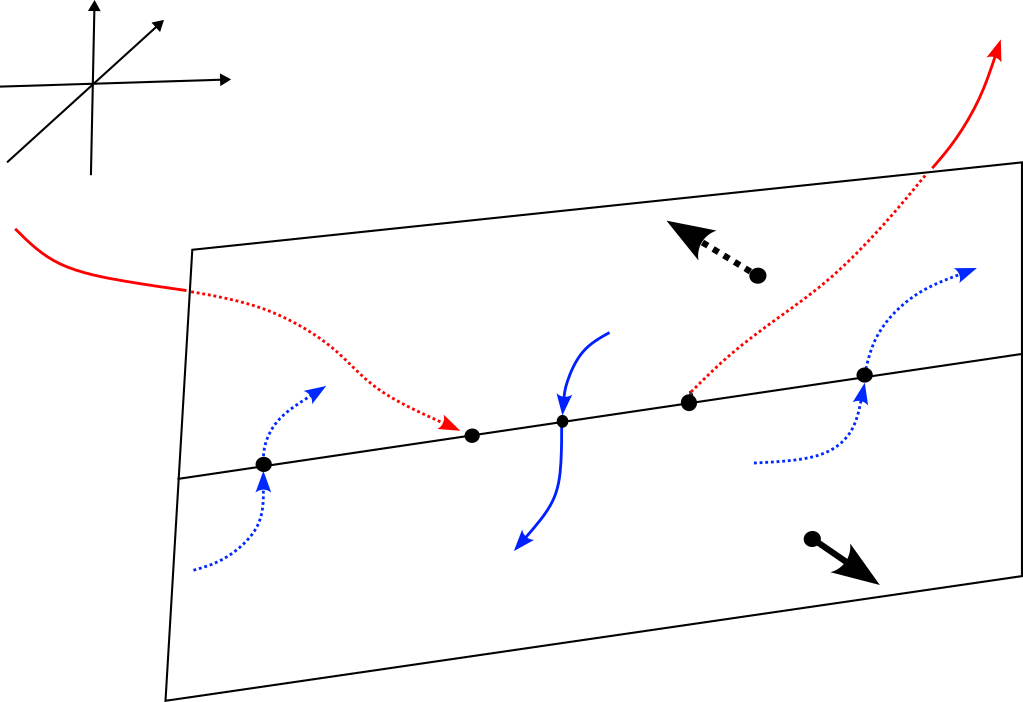}
\put(590,550){$\{\dot{x}>0\}$}
\put(360,210){$(-1,0,0)$}
\put(1030,350){$\{\dot{x}<0\}$}
\put(670,250){$(0,0,0)$}
\put(720,450){$H_+$}
\put(700,120){$H_-$}
\put(230,595){$x$}
\put(165,680){$y$}
\put(75,700){$z$}
\end{overpic}
\caption{\textit{The configuration of $H=H_+\cup H_-$ and $l$ for the Belousov--Zhabotinsky reaction (along with the directions of the vector field on them). The red curves denote the unbounded invariant manifolds given by Theorem \ref{infinitytheorem}, while the blue flow lines denote the (tangent) local dynamics on $l$.}}\label{Bpic}

\end{figure}

Having analyzed the Belousov--Zhabotinsky reaction, we now briefly apply similar ideas to study the Genesio--Tesi system - as the arguments are very similar to those used to analyze the Belousov--Zhabotinsky reaction we only give a rough sketch of the proof, avoiding the technical details. To begin, given constants $a,b>0$ we define the Genesio--Tesi equations (see \cite{GT}) by the following vector field:

\begin{equation}
\begin{cases}
\dot{x} = y \\
 \dot{y} = z\\
 \dot{z}=-az-by-x(1+x)
\end{cases}
\end{equation}

As observed numerically, there are parameter values at which there exists a chaotic invariant set - see \cite{GT} for the details. By direct computations, the fixed points are given by $(0,0,0),(-1,0,0)$ - and moreover, one can show both are saddle foci of opposing Poincaré indices. Denoting the vector field by $F$, similar arguments to those used to analyze the Belousov--Zhabotinsky equations now show the Genesio--Tesi system also satisfies the following:

\begin{enumerate}
    \item The Poincaré index at $\infty$ is $0$.
    \item The set $H=\{\dot{x}=0\}$ is a plane, and the tangency set for it is $l=\{(x,0,0)\mid x\in\mathbb{R}\}$.
    \item $H\setminus l$ is composed of two half-planes: the upper $H_+$ at which trajectories cross from $\{\dot{x}\leq0\}$ to $\{\dot{x}>0\}$, and $H_-$ where the opposite occurs (see Fig.\ref{Bpic}).
    \item For all $x>0$ and $x<-1$ the vector $F(x,0,0)=(0,0,-x(1+x))$ points in the negative $z$--direction. As a consequence, for such an $x$ the flow line arrives at $s=(x,0,0)$ from $\{\dot{x}<0\}$, hits $s$, and returns back to $\{\dot{x}<0\}$.
\end{enumerate}

The geometric scenario here is almost the same as that of the Belousov--Zhabotinsky reaction, as sketched in Fig.\ref{Bpic} - and as such, the Genesio--Tesi system also satisfies the assumptions of Theorem \ref{infinitytheorem}. Consequentially, by Theorem \ref{infinitytheorem} and Remark \ref{nocon} we conclude both fixed points $(-1,0,0)$ and $(0,0,0)$ are connected by one-dimensional invariant manifolds to $\infty$.
\subsection{Homoclinic nooses and Heteroclinic knots in the Michelson System.}
\label{ap2}
Given any $c>0$ we define the Michelson System as the flow generated by the following system of differential equations:

\begin{equation} \label{Mich}
\begin{cases}
\dot{x} = y\\
 \dot{y} = z\\
 \dot{z}=c^2-y-\frac{x^2}{2}
\end{cases}
\end{equation}
It is easy to prove this system has precisely two fixed points $p_+=(c\sqrt{2},0,0)$ and $p_-=(-c\sqrt{2},0,0)$ - it is also easy to prove the divergence of this dynamical system vanishes throughout $\mathbb{R}^3$. In this section we use Theorem \ref{infinitytheorem} to prove there exist infinitely many $c>0$ where the Michelson system generates an invariant one-dimensional structure - i.e., we prove there exist infinitely many $c>0$ in which the Michelson system generates either a heteroclinic knot or a homoclinic noose (see Definition \ref{knotnoose} and Theorem \ref{noose}).\\

We begin with basic qualitative analysis of the Michelson system. To do so, we first recall the Shilnikov condition (see \cite{LeS}) - i.e., given a saddle focus type fixed point $p$ with eigenvalues $\gamma,\rho\pm i\omega$ (where $\rho$ and $\gamma$ have opposite signs), we say $p$ satisfies the \textbf{Shilnikov Condition} if $\mid \frac{\rho}{\gamma}\mid <1$. As proven in \cite{LeS}, whenever a fixed points both satisfies the Shilnikov condition and generates a homoclinic trajectory, the local dynamics around it will include infinitely many suspended Smale Horseshoes. With these ideas in mind, we now prove the following technical Lemma:
\begin{lemma}
    For all $c>0$, $p_+$ and $p_-$ are saddle foci, of opposing indices. Moreover, both satisfy the Shilnikov condition.
\end{lemma}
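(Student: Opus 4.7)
The plan is to compute the characteristic polynomial of the Jacobian at each of $p_\pm$ explicitly, and then extract everything needed from a single Vieta observation, with the indices being essentially a by-product.

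First I would write the Jacobian of the Michelson vector field at an arbitrary point,
\begin{equation*}
J(x,y,z) = \begin{pmatrix} 0 & 1 & 0 \\ 0 & 0 & 1 \\ -x & -1 & 0 \end{pmatrix},
\end{equation*}
and specialize at $p_\pm = (\pm c\sqrt{2},0,0)$. A short cofactor expansion reduces the characteristic equations to $\lambda^3 + \lambda \pm c\sqrt{2} = 0$, with the sign matching that of $p_\pm$. The two cubics are related by $\lambda \mapsto -\lambda$, so the spectrum at $p_-$ is the negation of the spectrum at $p_+$; this single symmetry will carry the ``opposing indices'' half of the lemma for free, and means I only need to analyze $p_+$ in detail.

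Next I would study $P(\lambda) = \lambda^3 + \lambda + c\sqrt{2}$. Its discriminant is $-4 - 54c^2 < 0$ for every $c>0$, hence $P$ has exactly one real root $\gamma$ and a complex-conjugate pair $\rho \pm i\omega$ with $\omega \ne 0$. Since $P$ is strictly increasing on $\mathbf{R}$ and $P(0) = c\sqrt{2} > 0$, the real root satisfies $\gamma < 0$. The key shortcut is that $P$ has no $\lambda^2$ term, so by Vieta's formulas the three roots sum to zero, giving $\gamma + 2\rho = 0$ and hence $\rho = -\gamma/2$. Two conclusions fall out at once: $\rho$ and $\gamma$ have opposite signs, so $p_+$ is a saddle-focus; and the Shilnikov ratio is $|\rho/\gamma| = 1/2 < 1$, which is the desired inequality. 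By the $\lambda \mapsto -\lambda$ symmetry, the eigenvalues at $p_-$ are $-\gamma > 0$ and $-\rho \pm i\omega$ with $-\rho < 0$, so $p_-$ is a saddle-focus as well, with the same ratio $1/2$.

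To finish with the indices, I would invoke the standard identity that for a nondegenerate zero of a vector field in $\mathbf{R}^3$ the Poincare index equals $\mathrm{sign}(\det J)$. Here $\det J_+ = \gamma(\rho^2 + \omega^2) < 0$, while the spectral symmetry gives $\det J_- = (-1)^3 \det J_+ = -\det J_+ > 0$, so the two indices are $-1$ and $+1$ and are in particular opposite. There is no genuine obstacle in this proof; the only thing worth flagging is that writing $|\rho/\gamma|$ directly in terms of $c$ from the cubic formula would be unpleasant, so the whole argument hinges on noticing the vanishing $\lambda^2$ coefficient and letting Vieta collapse the Shilnikov inequality to a trivial identity.
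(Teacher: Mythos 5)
Your proof is correct, and it takes a cleaner route than the paper's. The paper also reduces to $p_+$, but it does so by invoking the time-reversal symmetry $(x,y,z,t)\mapsto(-x,y,-z,-t)$ of the vector field, and then writes out the explicit Cardano formulas for the three eigenvalues (Eqs.\ \ref{eigen1}--\ref{eigen3}), reads off that the complex pair and the real eigenvalue have real parts of opposite sign, uses $\det J_+<0$ to pin the real eigenvalue as negative, and then asserts that the Shilnikov inequality ``easily follows by direct computation.'' You instead work at the level of the characteristic polynomial $\lambda^3+\lambda\pm c\sqrt{2}$ (your cofactor expansion and the spectral symmetry $\lambda\mapsto-\lambda$ between $p_+$ and $p_-$ are both correct, the latter being the spectral shadow of the paper's symmetry), use the discriminant $-4-54c^2<0$ and monotonicity of $P$ to locate the real root $\gamma<0$, and then exploit the vanishing $\lambda^2$ coefficient (equivalently, the identically zero divergence the paper itself notes) to get $\rho=-\gamma/2$, so the saddle-focus property and the ratio $|\rho/\gamma|=\tfrac12<1$ drop out simultaneously and uniformly in $c$. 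This buys you two things the paper's argument lacks: you never need the unwieldy closed-form eigenvalue expressions, and the Shilnikov condition is verified explicitly rather than left as an unproved ``direct computation.'' Your index computation via $\mathrm{sign}(\det J_\pm)=\mp1$ for nondegenerate zeros is the standard fact (cf.\ the Milnor reference $[12]$ the paper leans on) and matches the paper's stated indices.
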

\begin{proof}
    We first remark the vector field generating Eq.\ref{Mich} is invariant under the symmetry $(x,y,z,t)\to(-x,y,-z,-t)$, which is seen by direct computation (where $t\in\mathbb{R}$ is the time variable). This implies it will suffice to prove $p_+$ is a saddle focus of Poincaré index $-1$ satisfying the Shilnikov condition - by the symmetry of the vector field it would immediately follow the same is true for $p_-$, and that it is a saddle focus of Poincaré index $1$. To begin, set $J_+$ as the Jacobian matrix of the vector field at $p_+$ - by computation, $J_+$ has a negative determinant, one real eigenvalue, and two eigenvalues given by the following formulas:

    \begin{equation} \label{eigen1}
\frac{1+i\sqrt{3}}{2^{\frac{2}{3}}3^{\frac{1}{3}}(\sqrt{3}\sqrt{27(c\sqrt{2})^2+4}-9c\sqrt{2})^{\frac{1}{3}}}-\frac{(1- i\sqrt{3})(\sqrt{3}\sqrt{27(c\sqrt{2})^2+4}-9c\sqrt{2})^{\frac{1}{3}}}{2^{\frac{4}{3}}3^{\frac{2}{3}}}
\end{equation}
  \begin{equation} \label{eigen2}
\frac{1-i\sqrt{3}}{2^{\frac{2}{3}}3^{\frac{1}{3}}(\sqrt{3}\sqrt{27(c\sqrt{2})^2+4}-9c\sqrt{2})^{\frac{1}{3}}}-\frac{(1+ i\sqrt{3})(\sqrt{3}\sqrt{27(c\sqrt{2})^2+4}-9c\sqrt{2})^{\frac{1}{3}}}{2^{\frac{4}{3}}3^{\frac{2}{3}}}
\end{equation}

As $c>0$ we have $\sqrt{27(c\sqrt{2})+4}>-9c\sqrt{2}$ - which implies these eigenvalues are complex-conjugate. In addition, by direct computation one also sees the real eigenvalue is given by the formula:

 \begin{equation} \label{eigen3}
\frac{(\sqrt{3}\sqrt{27(c\sqrt{2})^2+4}-9c\sqrt{2})^{\frac{1}{3}}}{2^{\frac{1}{3}}3^\frac{2}{3}}-\frac{2^{\frac{1}{3}}3^\frac{2}{3}}{(\sqrt{3}\sqrt{27(c\sqrt{2})^2+4}-9c\sqrt{2})^{\frac{1}{3}}}
\end{equation}
It follows the real part of the complex-conjugate eigenvalues and the real eigenvalues have opposite signs - combined with $\det(J_+)<0$ we conclude the real eigenvalue is negative. This shows $p_+$ is a saddle focus of Poincaré index $-1$ - moreover, by direct computation it easily follows $p_+$ satisfies the Shilnikov condition and the conclusion follows.
\end{proof}
To continue, we now introduce the following definitions:
\begin{definition}
\label{knotnoose}    Let $F$ be a smooth vector field of $\mathbb{R}^3$, with a finite number of fixed points $x_1,\ldots ,x_n$, with one-dimensional invariant manifolds $W_1,\ldots ,W_n$. We say $F$ generates a \textbf{homoclinic noose} (see Fig.\ref{knotnosepic}) if every fixed point satisfies the following:
    \begin{itemize}
        \item For all $i$, $W_i$ includes a bounded homoclinic trajectory, $\Gamma_i$.
        \item For all $i$, $W_i\setminus\Gamma_i$ is an unbounded set, i.e., an invariant manifold connecting $x_i$ and $\infty$.
    \end{itemize}
    Similarly, we say $F$ generates a \textbf{heteroclinic knot} provided the following is satisfied (see Fig.\ref{knotnosepic}):
    \begin{itemize}
        \item Every component in $W_i$, $1\leq i\leq n$ connects $x_i$ to either another fixed point $x_j$ or to $\infty$,
        \item $\{x_1,\ldots ,x_n,\infty\}\cup W_1\cup\ldots \cup W_n$ is a knot (in $S^3$).
    \end{itemize}
\end{definition}
\begin{figure}[h]
\centering
\begin{overpic}[width=0.35\textwidth]{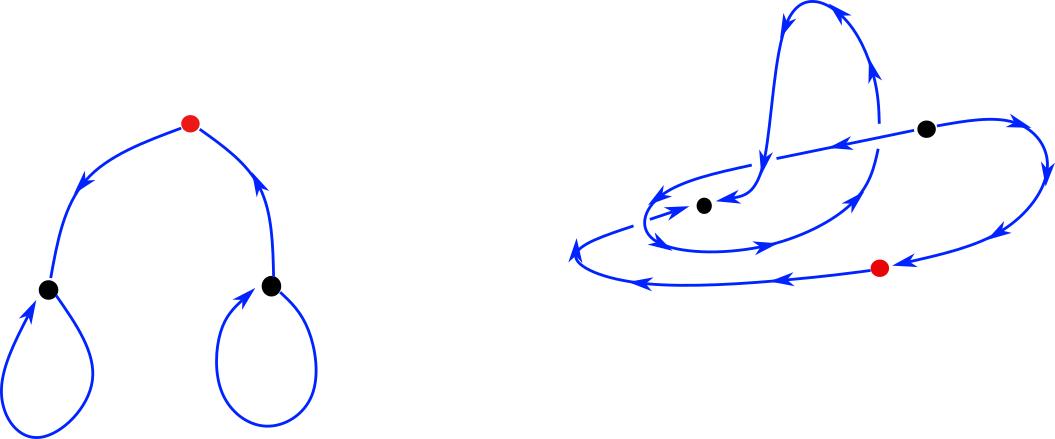}
\end{overpic}
\caption{\textit{A homoclinic noose on the left and a heteroclinic knot on the right. The black dots always denote finite fixed points, while the red dot denotes $\infty$.}}\label{knotnosepic}

\end{figure}

Similarly to the homoclinic Shilnikov scenario, Heteroclinic knots are well-known to be connected with the onset of chaos in three-dimensional flows - see for example \cite{Pi}, where the notion of a heteroclinic knot is used to analytically prove the existence of chaotic dynamics in the Lorenz system. The key heuristic behind the connection between heteroclinic dynamics and chaos is that the behavior of a given vector field $F$ on a heteroclinic knot $H$ can force to the dynamics in $\mathbb{R}^3\setminus H$ to behave in a certain way - or in other words, the topology of $\mathbb{R}^3\setminus H$ and the behavior of $F$ on $H$ can force complex dynamics to appear. As such, one would expect that given any one-dimensional set $H$ which is either a homoclinic noose or a heteroclinic knot the topology of $H$ could potentially force complex dynamics to appear.\\

We will not test out these ideas on the Michelson system, as the study of which homoclinic nooses and heteroclinic knots force complex dynamics to appear is well beyond the scope of this paper. Alternatively, combining the results of \cite{DW} with Theorem \ref{infinitytheorem} we will be content with proving the existence of infinitely many $c\in\mathbb{R}$ for which the Michelson system generates either a heteroclinic knot or a homoclinic noose. In more detail, we prove:
\begin{theorem}
  \label{noose}  There exist countably many $c>0$ at which the Michelson system generates either a homoclinic noose or a heteroclinic knot.
\end{theorem}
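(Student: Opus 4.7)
The plan is to combine Th.\ref{infinitytheorem} with the bounded-orbit results of $[6]$ on the Michelson system to produce the countable family of noose/knot parameters.

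I first verify that the Michelson system fits the hypotheses of Th.\ref{infinitytheorem}. Take $H=\{\dot{x}=0\}=\{y=0\}$, a plane; the normal is $(0,1,0)$ and $F\bullet(0,1,0)=\dot{y}=z$, so the tangency set is $l=\{(x,0,0):x\in\mathbf{R}\}$, parameterized increasingly by the $x$-coordinate. The half-planes $H_{+}=\{y=0,z>0\}$ and $H_{-}=\{y=0,z<0\}$ have the crossing behaviour prescribed by the theorem, and $H$ is transverse to every plane $\{x=c\}$. For the Poincare index at $\infty$, observe that since $F=(y,z,\cdot)$, the map $F/||F||$ can attain the direction $(0,0,\pm 1)$ only at points of $l$; on $l\cap S_r=\{(\pm r,0,0)\}$ the third component of $F$ equals $c^2-r^2/2<0$ for large $r$, so $F/||F||$ misses the direction $(0,0,1)$ on $S_r$ and has degree $0$, giving Poincare index $0$ at $\infty$. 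Combined with the preceding lemma, which gave saddle-foci at $p_{\pm}$ with complex-conjugate eigenvalues of opposite indices, all hypotheses of Th.\ref{infinitytheorem} are met. A brief Taylor expansion along $l$ using $\dot{y}(s)=0$ and $\ddot{y}(s)=c^2-x^2/2<0$ for $|x|>c\sqrt{2}$ places both $p_{+}$ and $p_{-}$ in Case $B$ of the proof, so Th.\ref{infinitytheorem} supplies unbounded $1$-dimensional invariant manifolds $\Gamma_{+}$ and $\Gamma_{-}$, each lying on a branch of the $1$D invariant manifold of the corresponding fixed point, connecting $p_{+}$ and $p_{-}$ to $\infty$.

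Second, I invoke the results of $[6]$ on the Michelson system, which together with the reversibility $(x,y,z,t)\mapsto(-x,y,-z,-t)$ supply a countable sequence of parameters $c_1,c_2,\ldots\in(0,\infty)$ at which the $1$-dimensional invariant manifolds of $p_{\pm}$ close up into a bounded invariant set: at each $c_n$ one obtains either a bounded heteroclinic orbit $\delta_n$ from $p_{+}$ to $p_{-}$ running along the branches complementary to $\Gamma_{\pm}$, or a pair of symmetric bounded homoclinic loops $\Gamma'_{+},\Gamma'_{-}$ at $p_{+}$ and $p_{-}$. In the heteroclinic case the union $\{p_{+},p_{-},\infty\}\cup\Gamma_{+}\cup\Gamma_{-}\cup\delta_n$ is a heteroclinic knot in the sense of Def.\ref{knotnoose}; in the homoclinic case, attaching $\Gamma_{\pm}$ to the loops $\Gamma'_{\pm}$ realizes the homoclinic-noose structure of Def.\ref{knotnoose}.

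The principal obstacle I anticipate is establishing that the bounded connections promised by $[6]$ lie on the same $1$-dimensional invariant manifolds whose complementary branches are the $\Gamma_{\pm}$ of Th.\ref{infinitytheorem}, rather than on some other invariant structure in phase space. Because each saddle-focus $p_{\pm}$ carries only two branches in its $1$D invariant manifold, and Th.\ref{infinitytheorem} forces one of them to escape to $\infty$ (with the two escape branches being distinct, by Remark \ref{nocon}), any bounded orbit accumulating on $p_{\pm}$ along its $1$D invariant direction must do so along the unique remaining branch. A secondary point, needed for the noose conclusion, is that the reversibility of the Michelson flow forces a homoclinic orbit at $p_{+}$ to be mirrored by a homoclinic orbit at $p_{-}$, so that Def.\ref{knotnoose}'s requirement that \emph{every} fixed point participate in the noose is met automatically; this is immediate from the symmetry.
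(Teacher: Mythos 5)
Your overall route is the same as the paper's: verify the hypotheses of Th.\ref{infinitytheorem} for the Michelson system with $H=\{\dot{x}=0\}=\{(x,0,z)\}$, $l=\{(x,0,0)\}$, the same half-planes $H_\pm$, and the same degree argument at $\infty$ (the map $F/\|F\|$ misses $(0,0,1)$ on large spheres because $c^2-x^2/2<0$ for $|x|$ large on $l\cap S_r$), and then splice in the heteroclinic/homoclinic parameter sequences of $[6]$. Up to that point your verification matches the paper's.

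There is, however, a genuine gap at the step you state as ``each lying on a branch of the $1$D invariant manifold of the corresponding fixed point.'' Th.\ref{infinitytheorem} only produces \emph{some} one-dimensional invariant manifold $\Gamma_\pm$ connecting $p_\pm$ to $\infty$; a priori this could be a single unbounded trajectory lying inside the \emph{two-dimensional} (spiralling) invariant manifold of the saddle-focus, in which case no branch of $W_\pm$ need escape to $\infty$ and your closing-up argument with the bounded orbits of $[6]$ collapses. Your discussion of the ``principal obstacle'' presupposes exactly the identification $\Gamma_\pm\subseteq W_\pm$ rather than proving it: the claim that ``Th.\ref{infinitytheorem} forces one of the two branches of the $1$D manifold to escape to $\infty$'' is not what the theorem says. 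The paper closes this gap with a separate argument: it computes $J_+(\gamma_1,0,\gamma_2)=(0,\gamma_2,-c\sqrt{2}\gamma_1)$, deduces $J_+H\neq H$, hence the two-dimensional invariant manifold of $p_+$ (and, by the symmetry $(x,y,z,t)\mapsto(-x,y,-z,-t)$, of $p_-$) is transverse to $H=\{\dot{x}=0\}$, so every trajectory on it crosses $H$ infinitely often, i.e.\ its $\dot{x}$ changes sign infinitely many times; since the construction in Th.\ref{infinitytheorem} yields $\Gamma_\pm$ on which the sign of $\dot{x}$ is constant, the only possibility is $\Gamma_\pm\subseteq W_\pm$. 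You need this (or an equivalent) transversality argument to make your second and third paragraphs go through; with it added, the rest of your proposal aligns with the paper's proof.
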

\begin{proof}
To begin, we denote by $W_\pm$ the respective one-dimensional invariant manifolds of the saddle foci $p_+$ and $p_-$. In addition, let us further recall the results of \cite{DW}, where the following was proven:    
\begin{theorem}
        There exist two sequences $\{c_n\}_n$ and $\{d_n\}_n$ of positive real numbers such that the following holds:
        \begin{enumerate}
            \item    At every $c_n$ the Michelson system generates a bounded heteroclinic trajectory connecting $p_+$ and $p_-$.
            \item At every $d_n$ the Michelson system generates two homoclinic trajectories - one at $p_+$ and another at $p_-$.
        \end{enumerate}
       \end{theorem}
For a proof, see Theorem 1.3 and Theorem 1.6 in \cite{DW}. By this Theorem it follows that to prove Theorem \ref{noose} it would suffice to show that for all $c>0$ the Michelson system generates two unbounded components - one in $W_+$ and $W_-$ each - which we do by applying Theorem \ref{infinitytheorem}. To this end, we begin by proving the Michelson system satisfies all the assumptions of Theorem \ref{infinitytheorem} (see Fig.\ref{michelsonconf}). At this point we state that by Remark \ref{nocon} we already know that if the Michelson system satisfies the assumptions of Theorem \ref{infinitytheorem}, both $p_+$ and $p_-$ generate an unbounded invariant manifold.\\

\begin{figure}[h]
\centering
\begin{overpic}[width=0.5\textwidth]{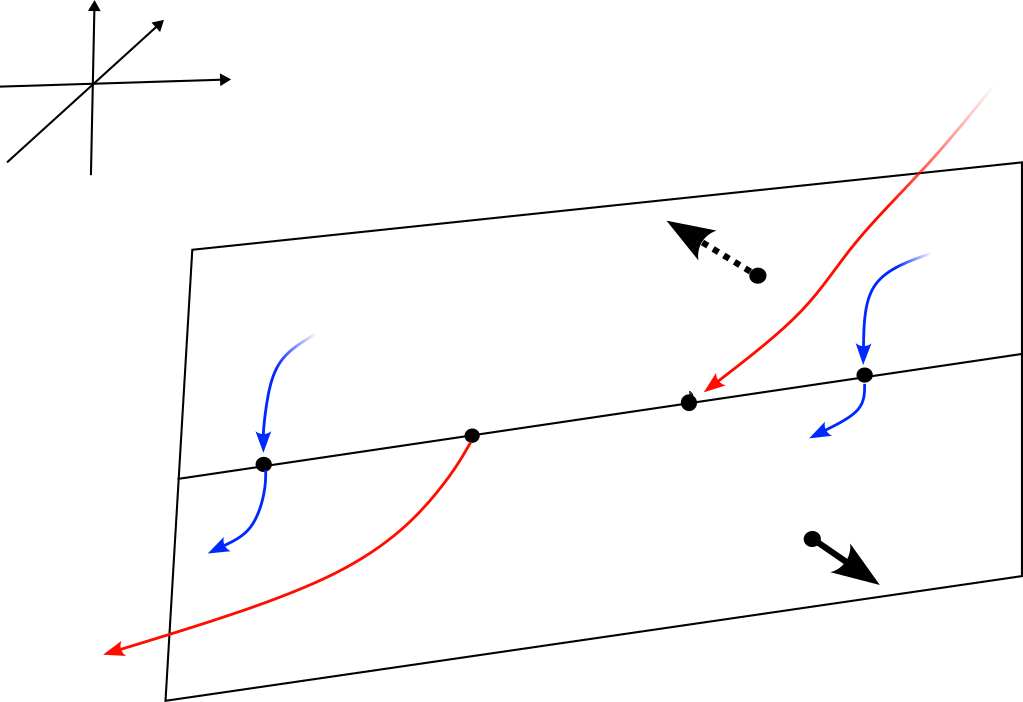}
\put(450,530){$\{\dot{x}>0\}$}
\put(430,280){$p_-$}
\put(1030,350){$\{\dot{x}<0\}$}
\put(670,250){$p_+$}
\put(500,400){$H_+$}
\put(600,120){$H_-$}
\put(230,595){$x$}
\put(165,680){$y$}
\put(75,700){$z$}
\end{overpic}
\caption{\textit{The configuration of $H=H_+\cup H_-$ and $l$ for the Michelson system (along with the directions of the vector field on them). The red curves denote the unbounded invariant manifolds given by Theorem \ref{infinitytheorem}, while the blue flow lines denote the (tangent) local dynamics on $l$.}}\label{michelsonconf}

\end{figure}

To begin, let us define $H=\{\dot{x}=0\}=\{(x,0,z)\mid x,z\in\mathbb{R}\}$ (where the velocity $\dot{x}$ is taken with respect to Eq.\ref{Mich}). Applying similar methods to those used in the previous section, we see the set $l$, the tangency set of the vector field to $H$ is parameterized by $\{(x,0,0)\mid x\in\mathbb{R}\}$ - and that in addition, $H\setminus l$ is composed of two half-planes, $H_+=\{(x,0,z)\mid z>0\}$ and $H_-=\{(x,0,z)\mid z<0\}$ such that on $H_+$ trajectories cross from $\{\dot{x}\leq0\}=\{(x,y,z)\mid y\leq0\}$ to $\{\dot{x}>0\}=\{(x,y,z)\mid y>0\}$, while in $H_-$ the opposite occurs (see Fig.\ref{michelsonconf}). Moreover, it is also easy to verify the following (see Fig.\ref{michelsonconf}):

\begin{itemize}
    \item For all $c\in\mathbb{R}$, the plane $H_r=\{(r,y,z)\mid y,z\in\mathbb{R}\}$ is transverse to $H$.
    \item Denote the vector field from Eq.\ref{Mich} by $F_c$. For all $\mid x\mid >c\sqrt{2}$, $F(x,0,0)=(0,0,c^2-\frac{x^2}{2})$ points in the negative $z$--direction. 
\end{itemize}

This implies that given $s=(x,0,0)$, $\mid x\mid >c\sqrt{2}$ the trajectory of $s$ arrives at it from $\{\dot{x}<0\}$ and re-enters $\{\dot{x}<0\}$ immediately upon leaving $s$ - as illustrated in Fig.\ref{michelsonconf}. Similarly to the previous section, it follows that in order to apply Theorem \ref{infinitytheorem} all that remains to be shown is that the Poincaré index of the Michelson system at $\infty$ is either $0,1$ or $-1$. To do so, we first note that by Proposition \ref{polyinf}, the vector field given by Eq.\ref{Mich} extends to $\infty$ (after modification) - where $\infty$ is added as a fixed point for the flow. In addition, we further note the vector field $F_c$ can point at the $(0,0,\lambda)$, $\lambda>0$ direction only on vectors which lie at the intersection $\{\dot{x}=0\}\cap\{\dot{y}=0\}$.\\

By computation we have $\{\dot{x}=0\}\cap\{\dot{y}=0\}=l$, i.e., the intersection corresponds to the straight line $\{(x,0,0)\mid x\in\mathbb{R}\}$. Now, recall that as shown earlier, for all sufficiently large $\mid x\mid $, $F(0,0,x)$ points in the negative $z$--direction - therefore, setting $S_r=\{\mid \mid s\mid \mid =r\},r>0$ a similar argument to the one used to study the Belousov--Zhabotinsky reaction proves that whenever $r>0$ is sufficiently large the map $\frac{F_c}{\mid \mid F_c\mid \mid }:S_r\to S^2$ does not include $(0,0,1)$ in its image. Or, in other words, the degree of $\frac{F_c}{\mid \mid F_c\mid \mid }$ on $S_r$ is $0$ - since $r$ can be taken to be arbitrarily large, by Definition \ref{index} we conclude the Poincaré index at $\infty$ is $0$. All in all, this shows Eq.\ref{Mich} satisfy the assumptions of Theorem \ref{infinitytheorem}, which implies both $p_+$ and $p_-$ generate a respective unbounded, invariant manifold $\Gamma_\pm$ connecting it to $\infty$. Therefore, to conclude the proof it remains to show $\Gamma_i\subseteq W_i$, where $i=+,-$.\\

We do so by showing the two-dimensional invariant manifolds of the saddle foci $p_+$ and $p_-$ are both transverse to $H$ - i.e., we prove the trajectory of any initial condition on these two invariant manifolds spirals between the regions $\{\dot{x}>0\}$ and $\{\dot{x}<0\}$ infinitely many times. Since $H=\{(x,0,z)\mid x,z\in\mathbb{R}\}$ is the vanishing set of the $\dot{x}$ velocity for Eq.\ref{Mich} and because the proof of Theorem \ref{infinitytheorem} shows the $\dot{x}$ velocity on both $\Gamma_+$ and $\Gamma_-$ does not change its sign, this would suffice to complete the proof. Again, due to the symmetric nature of Eq.\ref{Mich} it would suffice to prove this only for $p_+=(c\sqrt{2},0,0)$. To this end, we recall the Jacobian matrix at $p_+$, $J_+$, is given by the following matrix:

\begin{equation*}
    \begin{pmatrix}

0 & 1 & 0\\
0 & 0 & 1 \\
-c\sqrt{2} & -1 & 0

\end{pmatrix} 
\end{equation*}

Given a vector $(\gamma_1,0,\gamma_2)\in H$ we have the equalities $J_+(\gamma_1,0,\gamma_2)=(0,\gamma_2,-c\sqrt{2}\gamma_1)$ - which proves that for the vector $(\gamma_1,0,\gamma_2)$ to be a vector in $H=\{(x,0,z)\mid x,z\in\mathbb{R}\}$ it must satisfy $\gamma_2=0$. This immediately implies $J_+ H\ne H$, i.e., $J_+H$ is a plane transverse to $H$ - or in other words, $H=\{\dot{x}=0\}$ is not tangent to the two-dimensional invariant manifold of $p_+$, and has to be transverse to it. This proves the only possibility is $\Gamma_+\subseteq W_+$ and Theorem \ref{noose} now follows.
\end{proof}
\section{Discussion - weakening the assumptions of Theorem \ref{infinitytheorem}}
\label{nonstand}
Before concluding this paper, in this section we discuss how Theorem \ref{infinitytheorem} can possibly be generalized. Of course, one can derive more than one generalization to Theorem \ref{infinitytheorem}, as several of its assumptions can be relaxed - for example, it is easy to think of topological scenarios where, say, the cross-section $H$ is disconnected (see Fig.\ref{discc}), the Poincaré index at $\infty$ is not $0,1$ or $-1$, scenarios in which the existence of infinite number of fixed points for the flow does not present an obstruction, and so on.\\

These examples show that in general one probably should not expect a more general form of Theorem \ref{infinitytheorem}, as there can be many different generalizations of it - all depending on the topological scenario in question. Therefore, instead, in this section we exemplify how one can apply the ideas from the proof of Theorem \ref{infinitytheorem} to Dynamical Systems which do not satisfy all the assumptions of Theorem \ref{infinitytheorem}. In the spirit of Sections \ref{ap1} and \ref{ap2}, we will do so via analyzing a concrete example.\\

\begin{figure}[h]
\centering
\begin{overpic}[width=0.5\textwidth]{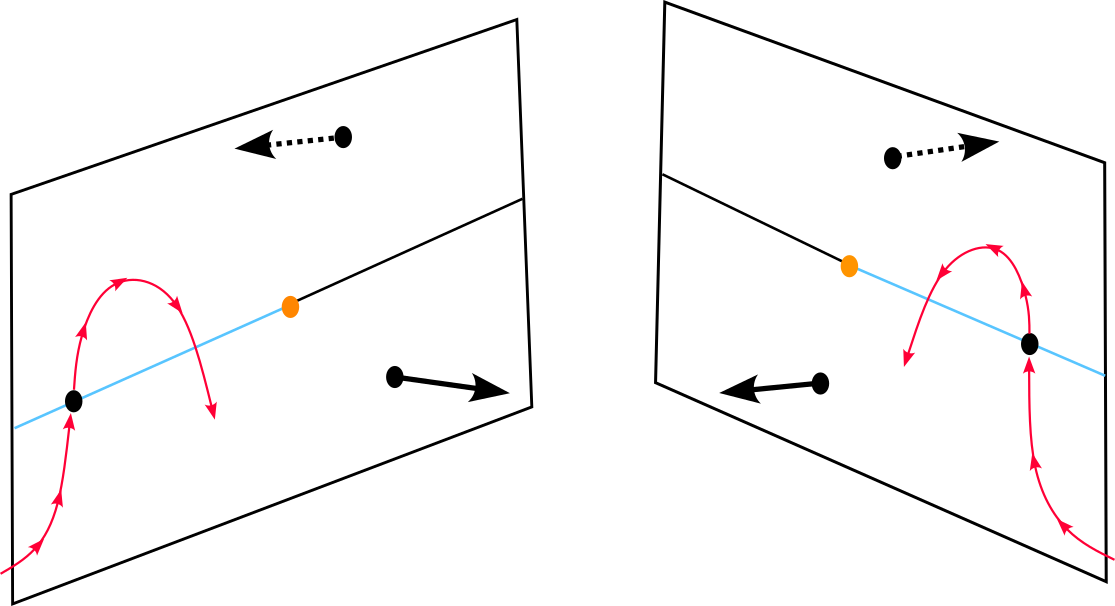}

\end{overpic}
\caption{\textit{A scenario in which the set $H$ is composed of two planes, with the fixed points $x_1$ and $x_2$ and the arcs $l_1$ and $l_2$ drawn as the orange dots and cyan lines (respectively). Based on the local dynamics on $l_1$ and $l_2$  one easily sees this Dynamical system satisfies the conclusions of Theorem \ref{infinitytheorem}.}}\label{discc}

\end{figure}

To introduce the said example, given $a>0$ consider the following vector field, originally derived from the Sprott E system and introduced at \cite{Mor}:

\begin{equation} \label{Mpr1}
\begin{cases}
\dot{x} = yz+a \\
 \dot{y} = x^2-y\\
 \dot{z}=1-4x
\end{cases}
\end{equation}
This system has precisely one fixed point, $p_a=(\frac{1}{4},\frac{1}{16},-16a)$, of Poincaré index $-1$ - for the details, see Proposition 1 in \cite{Mor}. By direct computation one can see that for $a=1$ the fixed point $p_a$ is a sink with two complex eigenvalues, of degree $-1$ - and indeed, from now on until the end of this section we will assume $a>0$ is such that  the fixed point $p_a$ has a pair of complex-conjugate eigenvalues, and we will always denote the corresponding vector field by $F_a$. Using highly similar ideas to those used to prove Theorem \ref{infinitytheorem} we now show that for any such an $a$ the one-dimensional invariant manifold of $p_a$, $W_a$, is composed of two unbounded, one-dimensional invariant manifolds connecting $p_a$ to $\infty$ - which we do despite the fact that, as we will soon see, the vector field $F_a$ does not satisfy all the assumptions of Theorem \ref{infinitytheorem}.\\ 

\begin{figure}[h]
\centering
\begin{overpic}[width=0.5\textwidth]{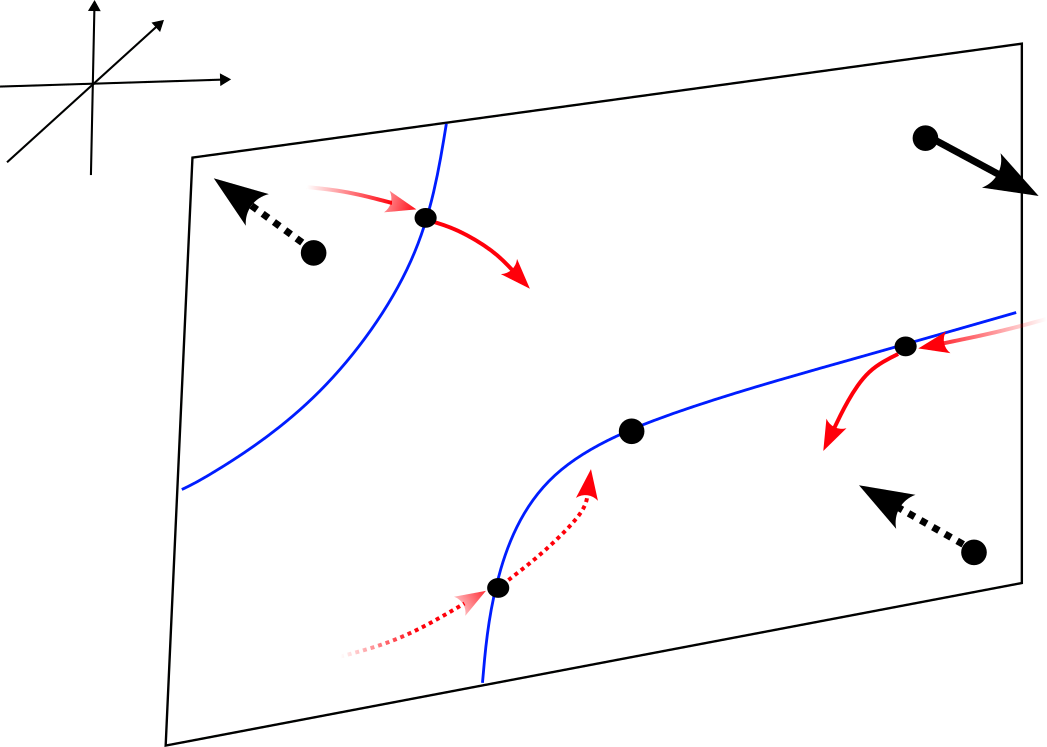}
\put(400,670){$\{\dot{z}>0\}$}
\put(440,210){$l_2$}
\put(1030,400){$\{\dot{z}<0\}$}
\put(720,300){$l_1$}
\put(570,340){$p_a$}
\put(720,450){$A_1$}
\put(700,160){$A_2$}
\put(200,400){$A_3$}
\put(380,400){$L$}
\put(225,630){$x$}
\put(160,700){$y$}
\put(75,720){$z$}
\end{overpic}
\caption{\textit{The plane $H=A_1\cup A_2\cup A_3\cup l$ and the direction of the vector field on it (where $l=L\cup l_1\cup l_2$). The flow lines tangent to $H$ at $l$ are sketched in red.}}\label{nons}

\end{figure}

To begin, we first note that since $p_a$ is isolated the maps $\frac{F_a}{\mid \mid F_a\mid \mid }:S_r\to S^2$, where again $S_r=\{s\in\mathbb{R}^3\mid \mid \mid s-p_a\mid \mid =r\}$ are homotopic as we vary $r>0$. Therefore, since for sufficiently small $r$ the degree of $\frac{F_a}{\mid \mid F_a\mid \mid }$ is $-1$. the same is true for all $r>0$. Hence, the Poincaré index at $\infty$ is $1$. Now, choose the velocity vanishing set $H=\{\dot{z}=0\}=\{(\frac{1}{4},y,z)\mid y,z\in\mathbb{R}\}$. This plane is transverse to all horizontal planes given by $H_c=\{(x,y,c)\mid x,y\in\mathbb{R}\}$, $c\in\mathbb{R}$, and that we have $\{\dot{z}>0\}=\{(x,y,z)\mid x<\frac{1}{4}\}$, $\{\dot{z}<0\}=\{(x,y,z)\mid x>\frac{1}{4}\}$ (see Fig.\ref{nons}). In addition, setting $J_a$ as the Jacobian matrix at $p_a$, using similar ideas to those presented in the end of the proof of Theorem \ref{noose} if follows $J_a H$ is a plane transverse to $H$ - which, similarly, implies the two-dimensional invariant manifold of $p_a$ is transverse to $H$. Continuing our analysis of the local dynamics on $H$, as the normal vector to $H$ is $(1,0,0)$ it similarly follows (again, by direct computation) that the tangency set of the vector field to $H$ is given by the curve $l=\{(\frac{1}{4},y,\frac{-a}{y})\mid y\in\mathbb{R}\}$.\\

Unlike the dynamical systems considered in Sections \ref{ap1} and \ref{ap2}, this time the curve $l$ does not satisfy the assumptions of Theorem \ref{infinitytheorem} - if only because $l$ is not homeomorphic to a real line (see Fig.\ref{nons}). In particular, it follows $H\setminus l$ is composed of three regions - $A_1,A_2$ and $A_3$, as illustrated in Fig.\ref{nons} and Fig.\ref{nons2}. However, as we now prove, certain elements in the proof of Theorem \ref{infinitytheorem} still apply - in particular, we show that despite the problematic topological structure of $l$, the properties of the vector field still allow us to apply many of the ideas presented in the proof of Theorem \ref{infinitytheorem}.\\

\begin{figure}[h]
\centering
\begin{overpic}[width=0.5\textwidth]{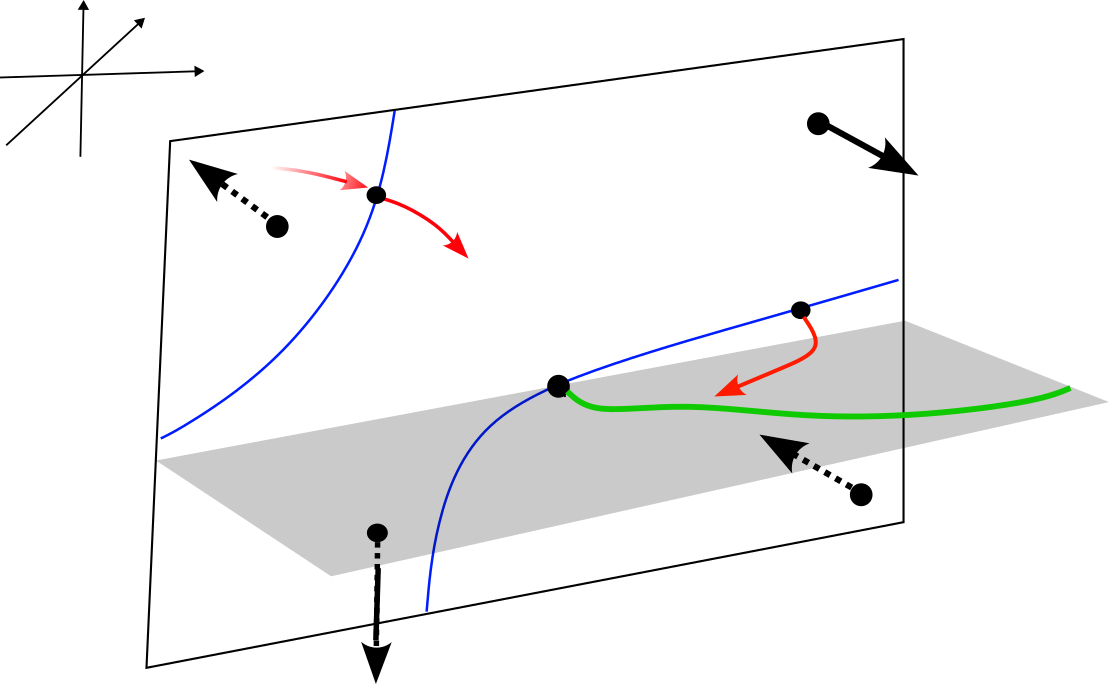}
\put(400,590){$\{\dot{z}>0\}$}
\put(440,150){$H'$}
\put(1000,350){$\{\dot{z}<0\}$}
\put(720,360){$l_1$}
\put(490,300){$p_a$}
\put(620,450){$A_1$}
\put(700,150){$A_2$}
\put(200,350){$A_3$}
\put(320,350){$L$}
\put(185,545){$x$}
\put(135,610){$y$}
\put(65,625){$z$}
\end{overpic}
\caption{\textit{The plane $H$ and its intersection with the (gray) half-plane $H'=H_0\cap\{\dot{z}<0\}$. In this scenario the trajectories of initial conditions $s\in l_1$ flow to the green curve.}}\label{nons2}

\end{figure}

We begin by further studying the local dynamics on $H$. Recalling the planar domains $A_1,A_2$ and $A_3$ defined above (see Figs.\ref{nons} and \ref{nons2}), by direct computation it follows that whenever $s\in A_1$ we have $F(s)\bullet(1,0,0)<0$, while for $s\in A_2,A_3$ we have $F(s)\bullet(1,0,0)>0$. In other words, $A_1$ is the set where trajectories cross from $\{\dot{z}>0\}$ to $\{\dot{z}<0\}$, while in $A_2$ and $A_3$ the opposite occurs (see Fig.\ref{nons2}). Now, note that for all $y\in\mathbb{R}$ we have $F(\frac{1}{4},y,-\frac{a}{y})=(0,\frac{1}{16}-y,0)$ - as $l=\{(\frac{1}{4},y,-\frac{a}{y})$, writing $s=(\frac{1}{4},y,-\frac{a}{y})$ and setting $L=\{(\frac{1}{4},y,-\frac{a}{y})\mid y<0\}$, $l_2=\{(\frac{1}{4},y,-\frac{a}{y})\mid \frac{1}{16}>y>0\}$, $l_1=\{(\frac{1}{4},y,-\frac{a}{y})\mid y>\frac{1}{16}\}$ we conclude that for $s\in l$ we have precisely one of the following (see the illustration in Fig.\ref{nons}):

\begin{enumerate}
    \item For $s\in L$ we have $y<0$ which implies $F(s)$ points in the positive $y$ direction - hence the flow lines arrive at $s$ from $\{\dot{z}<0\}$ and returns to $\{\dot{z}<0\}$ immediately upon leaving $s$.
    \item For $s\in l_1$ corresponding to $0<y<\frac{1}{16}$ again $F(s)$ points to the positive $y$--direction - but this time the same arguments imply the flow line arrives at $s$ from $\{\dot{z}>0\}$ and re-enters $\{\dot{z}>0\}$ immediately upon leaving $s$.
    \item Finally, for $s\in l_2$ corresponding to $y>\frac{1}{16}$ the vector $F(s)$ points in the negative $y$--direction. This implies the flow lines arrive at $s$ from $\{\dot{z}<0\}$ and return to it immediately upon leaving $s$.
\end{enumerate}

We now consider the plane $H_0=\{(x,y,-16a)\mid x,y\in\mathbb{R}\}$ (see Fig.\ref{nons2}) - by using similar ideas to those applied in the proof of Theorem \ref{infinitytheorem}, we now prove the existence of an unbounded invariant manifold for $p_a$. We begin by considering the trajectory of some initial conditions $s\in l_2$. There are precisely two possibilities (see Fig.\ref{nons2}):

\begin{enumerate}
    \item The trajectory of $s$ remains trapped forever in $\{\dot{z}<0\}$. Since the $z$--component of $s$ is greater than $-16a$ it follows that in this case the trajectory of $s$ eventually hits $H_0\cap\{\dot{z}<0\}$ transversely and enters $\{(x,y,z)\mid z<-16a\}$.
    \item The trajectory of $s$ eventually leaves $\{\dot{z}<0\}$, i.e., it hits transversely $A_3\cup A_3$ and enters $\{\dot{z}<0\}$. We now show the said trajectory must hit $A_2$ before it can hit $A_3$ - to this end, consider the half-plane $H_1=\{(x,\frac{1}{6},z)\mid x,z\in\mathbb{R}\}$. As the normal vector to $H_1$ is $(0,1,0)$ it is easy to prove that on $H_1$ separates $l_1$ and $A_3$ inside the half-space $\{\dot{z}\leq0\}=\{(x,y,z)\mid x\geq\frac{1}{4}\}$ it follows the trajectories of initial conditions on $l_1$ cannot hit $A_3$ \textbf{before} hitting $A_2$ - or in other words, in order for the trajectory of $s$ to hit $A_3$ (or $L$) it must first hit $A_2$ transversely and enter $\{\dot{z}<0\}=\{(x,y,z)\mid x<\frac{1}{4}\}$.
\end{enumerate}

All in all, we conclude the existence of a two-dimensional set $V$ made of the flow-lines connecting the initial conditions $s\in l_1$ to $A_2\cup H_0$ (see Fig.\ref{nons2}). Similarly to the arguments used in the proof of Theorem \ref{infinitytheorem} we conclude that under similar idealized assumptions on the behavior of the vector field at $\infty$, the two-dimensional set $V\cup H_0\cup A_2$ traps a topological cone $C_1$ with a tip at the fixed-point $p_a$. In particular, given any $s\in\partial C_1$, $F_a(s)$ satisfies precisely one of the following:

\begin{itemize}
    \item If $s\in V$, then $F_a(s)$ is tangent to $V$.
    \item If $s\in A_2$, then $F_a(s)$ points into $\{\dot{z}>0\}$.
    \item If $s\in H_0$, then $F_a(s)$ points into $\{(x,y,z)\mid z<-16a\}$.
\end{itemize}

Or in other words, on every $s\in\partial C_1$ the vector field either points outside of $C_1$ or lies tangent to $\partial C_1$ - which yields no trajectory can enter $C_1$ under the flow. Consequentially, similar arguments to those used to prove Theorem \ref{infinitytheorem} imply $p_a$ generates some unbounded invariant manifold $\Gamma_1\subseteq\{(x,y,z)\mid z>-16a\}\cap\{\dot{z}<0\}$ (see Fig.\ref{nons4}).\\

\begin{figure}[h]
\centering
\begin{overpic}[width=0.5\textwidth]{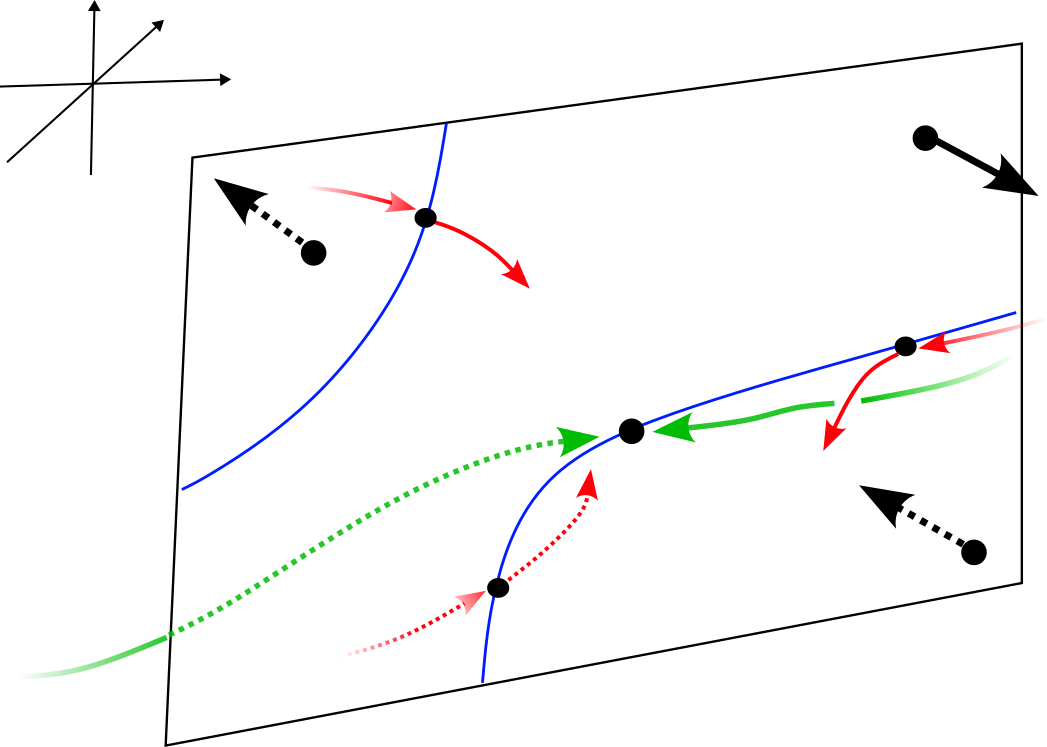}
\put(400,670){$\{\dot{z}>0\}$}
\put(430,215){$\Gamma_2$}
\put(1030,400){$\{\dot{z}<0\}$}
\put(720,270){$\Gamma_1$}
\put(570,340){$p_a$}
\put(720,450){$A_1$}
\put(700,160){$A_2$}
\put(200,400){$A_3$}
\put(380,400){$L$}
\put(225,630){$x$}
\put(160,700){$y$}
\put(75,720){$z$}
\end{overpic}
\caption{\textit{The invariant manifolds $\Gamma_1$ and $\Gamma_2$.}}\label{nons4}

\end{figure}

We now sketch the proof of the analogous result for $l_2=\{(\frac{1}{4},y,-\frac{a}{y})\mid 0<y<\frac{1}{16}\}$. As the $z$--coordinate of $s$ is less than $-16a$, by using similar arguments to those above now yield the trajectory of every initial condition $s\in l_2$ satisfies the following:
\begin{enumerate}
    \item The trajectory of $s$ remains trapped inside $\{\dot{z}>0\}$ - in which case it hits $H_0\cap\{\dot{z}>0\}$ transversely and enters $\{(x,y,z)\mid z>-16a\}$. In particular, $H_0\cap\{\dot{z}\geq0\}$ separates $l_2$ from $A_3$.
    \item  The trajectory of $s$ eventually escapes $\{\dot{z}\geq0\}$ into $\{\dot{z}<0\}$ by hitting $A_1\cap\{(x,y,z)\mid z\leq-16a\}$ transversely.
\end{enumerate}

Using similar arguments, we conclude the existence of $S$, a two-dimensional set of flow lines connecting $l_2$ and $H_0\cup A_1$. Similarly, this also yields the existence of an invariant manifold in $\Gamma_2\subseteq\{(x,y,z)\mid z<-16a\}\cap\{\dot{z}>0\}$ connecting $p_a$ to $\infty$, as illustrated in Fig.\ref{nons4}. We may now summarize our results as follows:
\begin{theorem}
    For all $a\in\mathbb{R}$, $a\ne0$ such that the fixed point $p_a$ has a pair of complex-conjugate eigenvalues, the system \ref{Mpr1} has two invariant manifolds, $\Gamma_1$ and $\Gamma_2$ connecting $p_a$ to $\infty$. Moreover,  $\Gamma_1\cup\Gamma_2$ forms the one-dimensional invariant manifold of $p_a$ and the union $\{p_a,\infty\}\cup\Gamma_1\cup\Gamma_2$ is a curve in $S^3$ ambient isotopic to $S^1$.
\end{theorem}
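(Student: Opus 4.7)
The plan is to organize the two cone-trapping constructions already prepared above into the three conclusions of the statement: existence of $\Gamma_1$ and $\Gamma_2$, the identification of their union with the full one-dimensional invariant manifold $W_a$, and the unknot property. First, I would invoke the two topological cones $C_1\subseteq\{z>-16a\}\cap\{\dot z<0\}$ and $C_2\subseteq\{z<-16a\}\cap\{\dot z>0\}$, each with tip at $p_a$ and each with boundary made of flow lines in $V$ (respectively $S$) together with pieces of $A_2\cup H_0$ (respectively $A_1\cup H_0$) along which $F_a$ is either tangent to the boundary or points outward, so that no trajectory can enter $\overline{C_i}$ in forward time. Following Stages $II$ and $III$ of the proof of Th.\ref{infinitytheorem}, I would smoothly deform $F_a$ so that $p_a$ becomes hyperbolic with its complex-conjugate eigenvalues and so that $\infty$ becomes a smooth fixed point of Poincare Index $1$ (permissible by Cor.\ref{zeroone} since the Poincare Index at $\infty$ was shown above to be $1$), apply Hartman-Grobman at $p_a$ to place an eigenvector of the real eigenvalue of the Jacobian $J_a$ inside each cone, and then take the $C^k$-limit as the deformation is removed to produce invariant curves $\Gamma_1\subseteq\overline{C_1}$ and $\Gamma_2\subseteq\overline{C_2}$ for the original $F_a$. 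Since $\dot z$ has constant sign inside each cone, the $z$-coordinate along the backwards trajectory of any $s\in\Gamma_i$ is monotone and escapes every compact set, so $\Gamma_i$ is unbounded and connects $p_a$ to $\infty$.

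For the identification $W_a=\Gamma_1\cup\Gamma_2\cup\{p_a\}$, I would use that by hypothesis $J_a$ has a complex-conjugate pair of eigenvalues together with exactly one real eigenvalue, so the one-dimensional invariant manifold of $p_a$ is a single invariant curve tangent to the real eigendirection with precisely two branches emanating from $p_a$. The cones $C_1$ and $C_2$ lie in opposite open half-spaces determined by the plane $\{z=-16a\}$, and the Hartman-Grobman identification above puts a distinct branch inside each closed cone; uniqueness of the real eigendirection at $p_a$ then forces these two branches to be exactly $\Gamma_1$ and $\Gamma_2$, yielding $W_a=\Gamma_1\cup\Gamma_2\cup\{p_a\}$.

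For the unknot claim, I would observe that $\Gamma_1$ is a simple arc from $p_a$ to $\infty$ lying in the open half-space $\{z>-16a\}$ and $\Gamma_2$ is a simple arc from $p_a$ to $\infty$ lying in $\{z<-16a\}$, so the two arcs meet only at their common endpoints $p_a$ and $\infty$ (the latter understood in $S^3$). The resulting simple closed curve $K=\{p_a,\infty\}\cup\Gamma_1\cup\Gamma_2$ is cut by the $2$-sphere $\{z=-16a\}\cup\{\infty\}$ into exactly two arcs, one in each hemisphere; any such curve bounds an embedded disc obtained by homotoping each $\Gamma_i$ monotonically within its hemisphere to a straight radial arc joining $p_a$ and $\infty$, and is therefore ambient isotopic to $S^1$. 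The main obstacle I expect is the approximation step: since the vector field extends only continuously (not smoothly) to $\infty$ by Prop.\ref{polyinf}, the Stage $III$ smoothing must be performed so as not to disturb the trichotomy $H\setminus l=A_1\sqcup A_2\sqcup A_3$ or the directions of $F_a$ on the $A_i$ and on $H_0$, which is more delicate than the two-component case handled in the proof of Th.\ref{infinitytheorem}.
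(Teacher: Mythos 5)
Your construction of the two cones, the deformation-and-limit argument for the existence of unbounded invariant curves $\Gamma_1\subseteq\{z>-16a\}\cap\{\dot z<0\}$ and $\Gamma_2\subseteq\{z<-16a\}\cap\{\dot z>0\}$, and the unknot argument (monotonicity of $z$ along each arc plus the separating sphere $\{z=-16a\}\cup\{\infty\}$) all track the paper's route faithfully. The gap is in the step that is the actual content of this theorem: the identification $W_a=\Gamma_1\cup\Gamma_2$. You argue that the Hartman--Grobman step ``puts a branch of the real-eigendirection manifold inside each closed cone'' and that uniqueness of the real eigendirection then forces $\Gamma_i$ to be these branches. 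This does not follow as stated. The cone construction (and its $C^k$-limit as the deformation near $\infty$ is removed) only produces \emph{some} invariant curve in each closed cone; the Hartman--Grobman conjugacy is merely a homeomorphism and does not transport tangency data, and the tangency you arrange for the deformed field $F'$ is not automatically inherited by the limiting curve of the original field $F_a$. In particular, nothing in your argument rules out that the limiting curve is a trajectory lying inside the \emph{two}-dimensional invariant manifold of $p_a$ (a spiralling orbit converging to $p_a$ is also an invariant curve emanating from $p_a$), in which case $\Gamma_1\cup\Gamma_2$ would not equal the one-dimensional manifold $W_a$, and appealing to ``uniqueness of the real eigendirection'' cannot decide which of these alternatives holds.

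The paper closes exactly this gap with a different, and genuinely needed, argument: since $J_aH$ is transverse to $H=\{\dot z=0\}$ and the remaining eigenvalues of $J_a$ are complex-conjugate, the two-dimensional invariant manifold of $p_a$ is transverse to $H$, so every flow line in it crosses $H$ infinitely often and its $\dot z$ velocity changes sign infinitely many times; but $\dot z$ has constant sign on $\Gamma_1$ and on $\Gamma_2$ (they lie in $\{\dot z<0\}$ and $\{\dot z>0\}$ respectively), so neither can be contained in the two-dimensional manifold and each must therefore be a branch of $W_a$. You in fact have all the ingredients for this on hand --- you note the constant sign of $\dot z$ in each cone, and the transversality $J_aH\ne H$ is established earlier in the section --- but your written identification step never combines them, and as written it would not survive scrutiny. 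Your closing worry about performing the Stage $III$ smoothing without disturbing the decomposition $H\setminus l=A_1\cup A_2\cup A_3$ is legitimate but secondary; the essential missing idea is the exclusion of $\Gamma_i$ from the two-dimensional invariant manifold via the sign of $\dot z$.
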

\begin{proof}
    Using similar arguments to those used in the proof of Theorem \ref{infinitytheorem} it is easy to prove $\Gamma_1$ and $\Gamma_2$ are not knotted or linked with one another, therefore $\Gamma_1\cup\Gamma_2\cup\{p_a,\infty\}$ is a knot in $S^3$ ambient isotopic to $S^1$. As such, we need only prove $\Gamma_1\cup\Gamma_2=W_1$, where $W_1$ is the one-dimensional invariant manifold of $p_a$. To do so, we recall that as proven in the beginning of this section, the two-dimensional invariant manifold of $p_a$, $W_2$, is transverse to the plane $H$ - which, since $p_a$ has two complex-conjugate eigenvalues and by $H=\{\dot{z}=0\}$ implies the $\dot{z}$ velocity of along any flow line on $W_2$ vanishes infinitely many times. Since $\Gamma_1\subseteq\{\dot{z}<0\}$, $\Gamma_2\subseteq\{\dot{z}>0\}$ the sign of $\dot{z}$ is constant on both $\Gamma_1$ and $\Gamma_2$, hence they can only be a part of $W_1$. All in all, $W_1=\Gamma_1\cup\Gamma_2$ and the assertion follows . 
\end{proof}
\section{Data availability statement}
No data was used for this paper.
\section{Competing interests statement}
The author states that there is no conflict of interest, and that he did not receive support from any organization for the submitted work. Moreover, the authors has no financial or proprietary interests in any material discussed in this article.
\section{Funding statement}
The author did not receive support or funding from any organization for the submitted work.
\section{Author contribution statement}
The sole author confirms sole responsibility for all aspects of the work. The author contributed to the study conception and design, material preparation, analysis, preparing all figures, and manuscript writing. The author has read and approved the final manuscript.
\printbibliography
\end{document}